\newtheorem{theorem}{Theorem}[section]
\newtheorem{corollary}[theorem]{Corollary}
\newtheorem{lemma}[theorem]{Lemma}
\theoremstyle{definition}
\newtheorem{remark}[theorem]{Remark}
\theoremstyle{definition}
\theoremstyle{definition}
\newtheorem{assumption}[theorem]{Assumption}
\def\dashint{\operatorname%
{\,\,\text{\bf--}\kern-.98em\DOTSI\intop\ilimits@\!\!}}
\newcommand{\WO}[2]{\overset{\scriptscriptstyle0}{W}\,\!^{#1}_{#2}}
\def\sfa{{\sf a}}
\def\sfb{{\sf b}}
\def\bR{\mathbb{R}}
\def\bQ{\mathbb{Q}}
\def\fL{\mathfrak{L}}
\def\cA{\mathcal{A}}
\def\cB{\mathcal{B}}
\def\cC{\mathcal{C}}
\def\cD{\mathcal{D}}
\def\cO{\mathcal{O}}
\title[Parabolic equations with partially VMO coefficients]
{Parabolic equations with partially VMO coefficients
and boundary value problems in Sobolev spaces with mixed norms}
\author{Doyoon Kim}
\subjclass[2000]{
35K10,%General theory of second-order, parabolic equations, 
35K20,%Boundary value problems for second-order, parabolic equations
35R05,%PDE with discontinuous coefficients or data 
35A05 %General existence and uniqueness theorems
}
\keywords{second order parabolic equations, 
Dirichlet and oblique derivative problems,
vanishing mean oscillation,
Sobolev spaces with mixed norms}
\begin{document}

\begin{abstract}
Second order parabolic equations in Sobolev spaces with mixed norms are studied.
The leading coefficients (except $a^{11}$) are measurable in both time and one spatial variable, and VMO in the other spatial variables.
The coefficient $a^{11}$ is measurable in time and VMO in the spatial variables.
The unique solvability of equations in the whole space is applied to solving Dirichlet and oblique derivative problems for parabolic equations defined in a half-space.
\end{abstract}

\maketitle

\section{Introduction}
In this paper we consider parabolic equations of the form
\begin{equation} \label{para_main_eq}
u_t + a^{ij}(t,x) u_{x^i x^j} + b^{i}(t,x) u_{x^i} + c(t,x) u = f
\end{equation}
in $L_{q,p}((S,T) \times \Omega)$,
$-\infty \le S < T \le \infty$,
where $\Omega$ is either $\bR^d$ or
$$
\bR^d_+ = \{ (x^1,x') \in \bR^d: x^1 > 0 \}.
$$
By $L_{q,p}((S,T) \times \Omega)$ we mean the collection
of all functions $u(t,x)$ such that the $L_q$-norm
of $\|u(t,\cdot)\|_{L_p(\Omega)}$, as a function of $t \in \bR$,
is finite.

The aim of this paper is to prove the existence 
and uniqueness of solutions
to equations as in \eqref{para_main_eq}
with coefficients satisfying: 
\begin{itemize}
\item[(i)]
$a^{11}$ is measurable in $t \in \bR$ and VMO in $x \in \bR^d$,
\item[(ii)]
$a^{ij}$, $i \ne 1$ or $j \ne 1$, 
are measurable in $(t,x^1) \in \bR^2$ and VMO in $x' \in \bR^{d-1}$.
\end{itemize}
The coefficients $b^{i}(t,x)$ and $c(t,x)$ are assumed to be only measurable and bounded. 
Under these assumptions,
for $f \in L_{q,p}((0,T) \times \Omega)$, $q \ge p \ge 2$,
we find a unique solution $u \in W_{q,p}^{1,2}((0,T) \times \Omega)$,
$u(T,x) = 0$, to the equation \eqref{para_main_eq}.
We also investigate the case $1<q \le p \le 2$
under additional assumptions on $a^{ij}$ (see assumptions before
Theorem \ref{theorem05212007}).

Note that $a^{ij}$, $i \ne 1$ or $j \ne 1$,
are only measurable (i.e., no regularity assumptions) in $x^1$,
so one can say that the class of coefficients considered in this paper 
is strictly bigger than those previously investigated,
for example, in \cite{MR1239929, MR1935919, MR2286441, Krylov_2005, Krylov_2007_mixed_VMO},
where {\em not necessarily continuous} coefficients are considered.
More precisely, the coefficients $a^{ij}$ in \cite{MR1239929} are VMO as functions of $(t,x) \in \bR^{d+1}$ (i.e. VMO in $(t,x)$).
Coefficients as functions of only $t \in \bR$ are dealt with in \cite{MR1935919}
and parabolic systems with VMO coefficients independent of time are investigated in \cite{MR2286441}.
The class of coefficients $a^{ij}$ measurable in time and VMO in the spatial variables (namely, $VMO_{x}$ coefficients) was first introduced in \cite{Krylov_2005}.
Later, the same class of coefficients was investigated in spaces with mixed norms in \cite{Krylov_2007_mixed_VMO}.

In addition to the fact that more general coefficients are available in the $L_p$-theory of parabolic equations,
another benefit of having coefficients measurable in one spatial variable
is that one can deal with parabolic equations in a half-space by only using the solvability of equations in the whole space, $\bR^{d+1}$ or $(S,T) \times \bR^d$.
Roughly speaking, one extends a given equation
defined in a half-space to the whole space using an odd or even extension,
and finds a unique solution to the extended equation
in the whole space.
Then the solution (to the extended equation) gives a unique solution to the original equation.
As is seen in the proof of Theorem \ref{theorem05132007},
an extension of an equation to the whole space
requires, in particular, the odd extensions of the coefficients $a^{1j}$,
$j = 2, \cdots, d$.
Even if $a^{1j}(t,x)$ are constant,
the odd extensions of $a^{1j}(t,x)$ are not continuous or not even in the space of VMO as functions in the whole space.
Thus if we were to consider equations with only VMO (or $VMO_x$) coefficients,
it wouldn't be possible to solve the extended equation in the whole space.
However, due to the solvability of equations in the whole space 
with coefficients $a^{ij}$,
$i \ne 1$ or $j \ne 1$, measurable in $x^1 \in \bR$ as well as in $t \in \bR$,
the extended equation has a unique solution.
This way of dealing with equations in a half-space removes
the necessity of boundary $L_p$-estimates
for solutions to equations in a half-space (or in a bounded domain).
For instance, in \cite{MR1239929} boundary estimates are obtained to have $L_p$-estimates for equations in a bounded domain.

The results for equations in a half-space together with a partition of unity
allow us to solve equations in a bounded domain,
so our results for equations in a half-space with Dirichlet or oblique derivative conditions
can be used to deal with equations with 
$VMO_{x}$ coefficients in a bounded domain.
To the best of our knowledge,
no literature is available for parabolic equations with $VMO_{x}$ coefficients in a bounded domain.
On the other hand, the results in this paper for equations in a half-space  provide a generalization of Corollary 1.3 in 
\cite{MR1828321}, where $a^{ij}$ are measurable functions of only $t \in \bR$, but $a^{1j}$, $j = 2, \cdots, d$, are assumed to be zero.

Slightly different classes of coefficients 
for parabolic equations are considered in 
\cite{Doyoon&Krylov:parabolic:2006, Doyoon:parabolic:2006, Doyoon:parabolic:mixed:2006}.
Especially, the paper \cite{Doyoon:parabolic:mixed:2006} and this paper have almost the same type of methods and results.
However, the main difference is that the coefficient $a^{11}$ in this paper is  measurable in $t$ and VMO in $x \in \bR^d$,
whereas the coefficient $a^{11}$ in \cite{Doyoon:parabolic:mixed:2006}
is measurable in $x^1 \in \bR$ and VMO in $(t,x') \in \bR \times \bR^{d-1}$.
One advantage of $a^{11}$ being as in this paper is that the even extension of $a^{11}$ is again VMO in $x \in \bR^d$ and measurable in $t \in \bR$.
This, indeed, allows us to deal with parabolic equations with coefficients measurable in $t \in \bR$ in a half-space or in a bounded domain.

For more references about elliptic or parabolic equations in Sobolev spaces
with or without mixed norms,
see \cite{
MR1191890, %(1991, Chiarenza)
MR1088476, %(1993, Chiarenza)
MR1239929, %(1993, Bramanti)
MR2260015, %(2000, book)
MR1941247, %(2003, Softova)
MR2106068, %(2004, Softova)
MR2153140, %(2005, Palagachev)
MR2110431, %(2005, Byun) 
MR2110205, %(2005, Byun)
MR2286441, %(2006, $L_pL_q$ system),
MR1935919, %(2002,Krylov)
Krylov_2007_mixed_VMO,
Doyoon:article_para:2004, %(2006,parabolic)
Doyoon:article_half_cont:2004, %(2007, elliptic),
Doyoon&Krylov:article_meas:2005%(to appear, elliptic)
}
and references therein.

The organization of this paper is as follows.
In section \ref{section05132007} we state the main results of this paper.
The first main result is proved in section \ref{section05202007} and 
the other results are proved using the first main result.
In section \ref{section04092007}
we treat parabolic equations in $L_p$.
Finally, we prove the first main result in section \ref{section05202007}.

A few words about notation:
$(t,x) = (t,x^1,x') \in \bR \times \bR^{d} = \bR^{d+1}$,
where $t \in \bR$, $x^1 \in \bR$, $x' \in \bR^{d-1}$,
and $x = (x^1,x') \in \bR^d$.
By $u_{x'}$ we mean one of $u_{x^j}$, $i = 2, \cdots,d$, or the whole collection
$\{ u_{x^2}, \cdots, u_{x^d} \}$.
As usual, $u_{x}$ represents one of $u_{x^i}$, $i = 1, \cdots, d$,
or the whole collection of $\{ u_{x^1}, \cdots, u_{x^d} \}$.
Thus $u_{xx'}$ is one of $u_{x^ix^j}$,
where $i \in \{1, \cdots, d\}$ and $j \in \{2, \cdots, d\}$,
or the collection of them.
The average of $u$ over an open set $\cD \subset \bR^{d+1}$
is denoted by $\left(u\right)_{\cD}$, i.e.,
$$
\left(u\right)_{\cD}
= \frac{1}{|\cD|} \int_{\cD} u(t,x) \, dx \, dt
= \dashint_{\cD} u(t,x) \, dx \, dt,
$$
where $|\cD|$ is the $d+1$-dimensional volume of $\cD$.
Finally, various constants are denoted by $N$, 
their values may change from one place to another. 
We write $N(d, \delta, \dots)$ if $N$ depends only on $d$, $\delta$, $\dots$.

\noindent{\sc Acknowledgement}: I would like to thank Hongjie Dong 
for his helpful discussions.

\section{Main results}							\label{section05132007}

The coefficients of the parabolic equation \eqref{para_main_eq} 
satisfy the following assumption.

\begin{assumption}\label{assum_01}
The coefficients $a^{ij}$, $b^{i}$, and $c$ are measurable functions defined on $\bR^{d+1}$, $a^{ij} = a^{ji}$.
There exist positive constants $\delta \in (0,1)$ and $K$ such that
$$
|b^{i}(t, x)| \le K, \quad |c(t, x)| \le K,
$$
$$
\delta |\vartheta|^2 \le \sum_{i,j=1}^{d} a^{ij}(t,x) \vartheta^i \vartheta^j
\le \delta^{-1} |\vartheta|^2
$$
for any $(t,x) \in \bR^{d+1}$ and $\vartheta \in \bR^d$.
\end{assumption}

In addition to this assumption, as discussed in the introduction,
we have another assumption on the coefficients $a^{ij}$.
We state this assumption using the following notation.
Let 
$$
B_r(x) = \{ y \in \bR^d: |x- y| < r \},
\quad
Q_r(t,x) = (t, t+r^2) \times B_r(x),
$$
$$
B'_r(x') = \{ y' \in \bR^{d-1}: |x'- y'| < r \},
$$
$$
\Lambda_{r}(t,x)= (t, t+r^2) \times (x^1-r, x^1+r) \times B'_r(x').
$$
Set $B_r = B_r(0)$, $B'_r = B'_r(0)$, $Q_r = Q_r(0)$ and so on. 
By $|B'_r|$ we mean  the $d-1$-dimensional volume of $B'_r(0)$.
Denote 
$$
\text{osc}_{x'} \left( a^{ij}, \Lambda_r(t,x) \right) 
= r^{-3} |B'_r|^{-2} \int_{t}^{t+r^2}\int_{x^1-r}^{x^1+r} A^{ij}_{x'}(s,\tau)  \, d\tau \,  ds, 
$$
$$
\text{osc}_{x} \left( a^{ij}, Q_r(t,x) \right) 
= r^{-2} |B_r|^{-2}  \int_{t}^{t+r^2}  A^{ij}_{x}(\tau) \, d\tau, 
$$
where
$$
A^{ij}_{x'}(s,\tau)
= \int_{y', z' \in B'_r(x')}
|a^{ij}(s, \tau, y') - a^{ij}(s, \tau, z') | \, dy' \, dz',
$$
$$
A^{ij}_{x}(\tau)
= \int_{ y, z \in B_r(x)}
|a^{ij}(s, y) - a^{ij}(s, z) | \, dy \, dz.
$$
Also denote
$$
\cO_R^{\, x'}(a^{ij}) = \sup_{(t,x) \in \bR^{d+1}} \sup_{r \le R} \,\,\, \text{osc}_{x'} \left( a^{ij}, \Lambda_r(t,x) \right),
$$
$$
\cO_R^{\, x}(a^{ij}) = \sup_{(t,x) \in \bR^{d+1}} \sup_{r \le R} \,\,\, \text{osc}_{x} \left( a^{ij}, B_r(t,x) \right).
$$
Finally set
$$
a_R^{\#} =\cO_R^{\, x}(a^{11}) + \sum_{i \ne 1 \, \text{or} \, j \ne 1} \cO_R^{\, x'}(a^{ij}).
$$

\begin{assumption}\label{assum_02}
There is a continuous function $\omega(t)$ defined on $[0,\infty)$ such that $\omega(0) = 0$ and $a_R^{\#} \le \omega(R)$ for all $R \in [0,\infty)$.
\end{assumption}

Let $\Omega$ be either $\bR^d$ or $\bR^d_+$.
We consider the space $W_{q,p}^{1,2}((S,T) \times \Omega)$,
$-\infty \le S < T \le \infty$,
which is the collection of all functions defined on $(S,T) \times \Omega$
such that 
$$
\| u \|_{W_{q,p}^{1,2}((S,T) \times \Omega)}
:= \|u \|_{L_{q,p}((S,T) \times \Omega)} 
+ \| u_{x} \|_{L_{q,p}((S,T) \times \Omega)} 
$$
$$
+ \| u_{x x} \|_{L_{q,p}((S,T) \times \Omega)}
+ \| u_t \|_{L_{q,p}((S,T) \times \Omega)} < \infty.
$$
By $u \in \WO{1,2}{q,p}((S,T) \times \bR^d)$
we mean $u \in  W_{q,p}^{1,2}((S,T) \times \bR^d)$
and $u(T,x) = 0$.
Throughout the paper, we set
$$
L_{q,p} := L_{q,p} (\bR \times \bR^d),
\quad
W_{q,p}^{1,2} : = W_{q,p}^{1,2}(\bR \times \bR^{d}).
$$
In case $p = q$, we have
$$
L_p((S, T) \times \Omega)
= L_{p,p}((S, T) \times \Omega),
$$
$$
W_p^{1,2}((S,T) \times \Omega)
= W_{p,p}^{1,2}((S,T) \times \Omega).
$$

We denote the differential operator by $L$, that is,
$$
Lu = u_t + a^{ij} u_{x^i x^j} + b^{i} u_{x^i} + c u.
$$
The following are the main results of this paper.

\begin{theorem}							\label{theorem03192007}
Let $q \ge p \ge 2$, $0 < T < \infty$,
and the coefficients of $L$ satisfy
Assumption \ref{assum_01} and \ref{assum_02}.
In addition, if $p = 2$, the coefficients of $L$
are assumed to be independent of $x' \in \bR^{d-1}$.
Then for any $f \in L_{q,p}((0,T) \times \bR^d)$, 
there exists a unique $u \in \WO{1,2}{q,p}((0,T) \times \bR^d)$ such that 
$Lu = f$ in $(0,T) \times \bR^d$.
Furthermore, there is a constant $N$, depending only on $d$, $p$, $q$, $\delta$, $K$, $T$, and $\omega$,
such that, for any $u \in \WO{1,2}{q,p}((0,T) \times \bR^d)$,
$$
\| u \|_{W_{q,p}^{1,2}((0,T) \times \bR^d)}
\le N \| Lu \|_{L_{q,p}((0,T) \times \bR^d)}.
$$
\end{theorem}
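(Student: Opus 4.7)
The plan is to reduce the statement to an a priori estimate for the principal-part operator on all of $\bR\times\bR^d$, prove this estimate via a sharp-function argument built on the $L_p$-theory of Section \ref{section04092007}, and then obtain existence by the method of continuity starting from the heat operator.

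As a preparatory step I would dispose of the lower-order terms and shorten the time interval. Since $b^i$ and $c$ are bounded, moving $b^iu_{x^i}+cu$ to the right-hand side and applying interpolation gives
\[
\|b^iu_{x^i}+cu\|_{L_{q,p}}\le \varepsilon\|u_{xx}\|_{L_{q,p}}+N_\varepsilon\|u\|_{L_{q,p}},
\]
while the condition $u(T,\cdot)=0$ permits absorbing the $\|u\|_{L_{q,p}}$-term on short time intervals by writing $u(t,\cdot)=-\int_t^Tu_s\,ds$ and applying Minkowski's inequality in $t$, then iterating over $(0,T)$. Extending $u$ by zero for $t>T$ reduces the problem to establishing $\|u_{xx}\|_{L_{q,p}(\bR\times\bR^d)}\le N\|Lu\|_{L_{q,p}(\bR\times\bR^d)}$ for $L=\partial_t+a^{ij}\partial_{x^ix^j}$.

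The heart of the proof is a mean-oscillation bound of the form
\[
\dashint_{\cD_r}\bigl|u_{xx}-(u_{xx})_{\cD_r}\bigr|^{p_0}\,dz\le N\,\omega(r)\,\cM(|u_{xx}|^{p_0})+N\,\cM(|Lu|^{p_0}),
\]
valid for some $p_0\in(1,2)$, where $\cD_r$ is the standard parabolic cylinder $Q_r$ when one perturbs $a^{11}$ and the anisotropic cylinder $\Lambda_r$ when one perturbs the remaining $a^{ij}$. I would obtain it by freezing the rough coefficients on $\cD_r$ (in $x$ for $a^{11}$, in $x'$ for the other $a^{ij}$), solving the frozen-coefficient model equation with the same right-hand side, and invoking the $L_{p_0}$-theory of Section \ref{section04092007} to estimate the error $(a^{ij}-\bar a^{ij})u_{x^ix^j}$; interior regularity of the model solution produces the oscillation piece. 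The main technical obstacle is matching each coefficient with the correct cylinder geometry so that Assumption \ref{assum_02} actually furnishes the small factor $\omega(r)$, since the two kinds of cylinders have different aspect ratios in $(t,x^1,x')$.

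With the sharp-function estimate in hand, the Fefferman--Stein sharp-function theorem and the Hardy--Littlewood maximal function inequality, both in the mixed-norm space $L_{q,p}$ with $q\ge p>p_0>1$, yield
\[
\|u_{xx}\|_{L_{q,p}}\le N\,\omega(R)^{1/p_0}\|u_{xx}\|_{L_{q,p}}+N\|Lu\|_{L_{q,p}},
\]
and choosing $R$ small enough absorbs the first term on the left. The special case $p=2$, where coefficients are assumed independent of $x'$, bypasses the $x'$-oscillation altogether and reduces to the one-dimensional-in-$x^1$ theory directly. Existence is then deduced by the method of continuity applied to $L_\lambda=(1-\lambda)(\partial_t+\Delta)+\lambda L$: the a priori estimate is uniform in $\lambda\in[0,1]$, and at $\lambda=0$ solvability of the heat equation with zero terminal condition in $\WO{1,2}{q,p}((0,T)\times\bR^d)$ is classical.
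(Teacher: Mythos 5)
Your overall scheme — reduce to an a priori estimate, prove a mean-oscillation bound by freezing coefficients, apply Fefferman--Stein and Hardy--Littlewood, then use the method of continuity — matches the shape of the paper's argument. But there is a genuine gap in the central step: the mean-oscillation bound you state is for the \emph{full} Hessian $u_{xx}$, and this cannot be obtained from the frozen-coefficient analysis under the paper's hypotheses.

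When you freeze $a^{11}$ in $x$ and the remaining $a^{ij}$ in $x'$, the model operator $\bar L_0 u = u_t + \bar a^{11}(t)u_{x^1x^1} + \ldots$ still has coefficients that are \emph{merely measurable} in $(t,x^1)$. Solutions of $\bar L_0 v=0$ therefore gain no regularity whatsoever for $v_{x^1x^1}$ or $v_t$; only the mixed derivatives $v_{xx'}$ (those involving at least one $x'$-differentiation) enjoy interior estimates, and indeed the paper's Lemma \ref{lemma03282007_100} and Theorem \ref{main_sharp} are stated for $u_{xx'}$, not $u_{xx}$. Your claimed inequality $\dashint_{\cD_r}|u_{xx}-(u_{xx})_{\cD_r}|^{p_0}\le N\omega(r)\cM(|u_{xx}|^{p_0})+N\cM(|Lu|^{p_0})$ simply fails: take $d=1$ and $a^{11}=a^{11}(t)$, so $\omega\equiv0$; the inequality would then assert a sharp-function bound on $u_{x^1x^1}$ in terms of $M(|u_t+a^{11}u_{x^1x^1}|^{p_0})$ alone, which is false because $u_{x^1x^1}=-u_t/a^{11}$ is only $L_p$. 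The missing idea is how to recover $u_{x^1x^1}$: the paper writes $g=L_0u+\Delta_{d-1}u-\sum_{(i,j)\ne(1,1)}a^{ij}u_{x^ix^j}$, notes that $u$ then solves $L_1u=g$ with $L_1u=u_t+a^{11}(t,x)u_{x^1x^1}+\Delta_{d-1}u$, and invokes Krylov's estimate for this special operator (Corollary 3.7 of \cite{Krylov_2005}) to bound $\|u_{x^1x^1}\|_{L_p}$ by $\|g\|_{L_p}\le\|L_0u\|_{L_p}+N\|u_{xx'}\|_{L_p}$, which closes the loop once the $u_{xx'}$-estimate is in hand. Without some version of this step, the sharp-function route does not give the full Hessian estimate.

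A secondary issue: the paper's oscillation lemma requires $p\ge 2$ and is applied with exponent $2$; the passage to $L_p$ needs $p>2\mu>2$ after Hölder, which is exactly why the mixed-norm theorem is restricted to $q\ge p\ge 2$ and why $p=2$ requires the extra hypothesis that $a^{ij}$ are independent of $x'$ (handled via Theorem 2.2 of \cite{Doyoon&Krylov:parabolic:2006}). Choosing $p_0\in(1,2)$ as you propose would require a version of the oscillation estimate below $L_2$ that is not available in this framework.
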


\begin{remark}							\label{remark05212007}
In the above theorem, if $p = q = 2$, by Theorem 2.2 in \cite{Doyoon&Krylov:parabolic:2006}
the coefficients $a^{ij}(t,x)$ are allowed to be measurable functions of $(t,x^1) \in \bR^2$ including $a^{11}$.
The same argument applies to Theorems \ref{theorem05212007} and
\ref{theorem05132007} below.
On the other hand, whenever we have coefficients $a^{ij}$ independent 
of $x'' \in \bR^{m}$, $m \le d$, we can replace them by coefficients $a^{ij}(t,x)$ which are uniformly continuous with respect to $x''$ uniformly in the remaining variables.
\end{remark}

The next theorem considers the case with $1 < q \le p \le 2$.
In this case, we assume that the coefficients $a^{ij}$ of L satisfy one of the following assumptions (recall that $a^{ij} = a^{ji}$):
\begin{itemize}

\item[(i)]
The coefficients $a^{1j}$, $j=2,\cdots, d$, are measurable functions of $(t,x^1) \in \bR^2$ and the other coefficients $a^{ij}$ are functions of only $t \in \bR$. That is,
\begin{equation}							\label{05252007_01}
\left\{
\begin{aligned}
a^{ij}(t,x) &= a^{ij}(t), \quad i=j=1 \quad \text{or} \quad 
i,j \in \{2,\cdots,d\}\\
a^{1j}(t,x) &= a^{1j}(t,x^1), \quad j = 2, \cdots, d
\end{aligned}\right..
\end{equation}

\item[(ii)]
The coefficients $a^{ij}$, $i,j \ge 2$, are measurable functions of $(t,x^1) \in \bR^2$ and the other coefficients $a^{ij}$ are functions of only $t \in \bR$.
That is,
\begin{equation}							\label{05252007_02}
\left\{
\begin{aligned}
a^{1j}(t,x) &= a^{1j}(t), \quad j = 1, \cdots, d\\
a^{ij}(t,x) &= a^{ij}(t,x^1), \quad i,j \in \{2,\cdots,d\}
\end{aligned}\right..
\end{equation}

\end{itemize}

\begin{theorem}							\label{theorem05212007}
Let $1 < q \le p \le 2$
and the coefficients $a^{ij}$ of $L$ be as above.
Then for any $f \in L_{q,p}((0,T) \times \bR^d)$, 
there exists a unique $u \in \WO{1,2}{q,p}((0,T) \times \bR^d)$ such that 
$Lu = f$ in $(0,T) \times \bR^d$.
Furthermore, there is a constant $N$, depending only on $d$, $p$, $q$, $\delta$, $K$, and $T$,
such that
\begin{equation}							\label{05252007_04}
\| u \|_{W_{q,p}^{1,2}((0,T) \times \bR^d)}
\le N \| Lu \|_{L_{q,p}((0,T) \times \bR^d)}
\end{equation}
for any $u \in \WO{1,2}{q,p}((0,T) \times \bR^d)$.
\end{theorem}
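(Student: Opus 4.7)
My plan is to derive Theorem \ref{theorem05212007} from Theorem \ref{theorem03192007} by a duality argument. For $1<q\le p\le 2$ the conjugate exponents satisfy $q'\ge p'\ge 2$, which lies in the range covered by Theorem \ref{theorem03192007}; so if the formal adjoint $L^*$ fits that theorem's coefficient framework, then $L^*v=g$ is uniquely solvable in $\WO{1,2}{q',p'}((0,T)\times\bR^d)$ (after the time-reversal $s=T-t$), and dualizing will give what is needed for $L$.

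The first step is to identify $L^*$ and check that it meets the hypotheses of Theorem \ref{theorem03192007} via
\[
\int_0^T\!\!\int(Lu)\,v\,dx\,dt = \int_0^T\!\!\int u\,(L^*v)\,dx\,dt.
\]
Case (ii) is immediate: since $a^{1j}(t)$ is independent of $x$ and $a^{ij}(t,x^1)$ for $i,j\ge 2$ is independent of both $x^i$ and $x^j$, two integrations by parts produce no derivative of any $a^{ij}$, and
\[
L^*v = -v_t + a^{ij}v_{x^ix^j} + \tilde b^i v_{x^i} + \tilde c v
\]
has the same principal part as $L$. Case (i) is more delicate because the naive computation produces the ill-defined term $a^{1j}_{x^1}v_{x^j}$ arising from $\partial_{x^1}\partial_{x^j}(a^{1j}(t,x^1)v)$; the remedy is to integrate $\int a^{1j}u_{x^1x^j}v$ only in the $x^j$-direction (where $a^{1j}$ is constant) and symmetrize in $(i,j)$, producing an admissible $L^*$ whose coefficients again use only the $L^\infty$-data of the $a^{ij}$. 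In both cases $a^{11}$ depends only on $t$ and $a^{ij}$ ($i\ne 1$ or $j\ne 1$) depends only on $(t,x^1)$, so Theorem \ref{theorem03192007} applies to $L^*$ at indices $q'\ge p'\ge 2$; when $p'=2$ (i.e.\ $p=2$) its additional $x'$-independence hypothesis holds automatically because in (i) and (ii) every $a^{ij}$ is $x'$-independent.

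Once the adjoint is under control, the a priori estimate follows by the standard duality pairing: for $u\in\WO{1,2}{q,p}$ and any $g\in L_{q',p'}$, solving $L^*v=g$ with $v(0,\cdot)=0$ and integrating by parts gives $\int u\,g = \int (Lu)\,v$, whence $\|u\|_{L_{q,p}}\le N\|Lu\|_{L_{q,p}}$ upon taking the supremum over $g$. Analogous pairings with $g$ of the form $\psi_{x^ix^j}$ or $\psi_t$---or a mollification argument that applies Theorem \ref{theorem03192007} to smoothed coefficients and passes to the limit through the uniform estimate---upgrade this to the full $W^{1,2}_{q,p}$-bound \eqref{05252007_04}. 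Existence of $u$ then follows by the method of continuity connecting $L$ to the heat operator $\partial_t-\Delta$.

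The main obstacle is the case (i) analysis of $L^*$: one has to rearrange the integration by parts so that only the essentially bounded values of $a^{1j}(t,x^1)$ enter while the inadmissible distributional derivative $a^{1j}_{x^1}$ is avoided, still producing an operator in the class of Theorem \ref{theorem03192007}. A secondary technical point is justifying the identity $\int Lu\cdot v = \int u\cdot L^*v$ for $u\in W^{1,2}_{q,p}$ and $v\in W^{1,2}_{q',p'}$ rather than just for smooth compactly supported functions, which is a standard but careful mollification argument.
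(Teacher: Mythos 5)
Your overall strategy --- derive the estimate from Theorem \ref{theorem03192007} by a duality pairing in the dual exponents $q'\ge p'\ge 2$ --- is exactly the route taken in the paper. However, the way you set it up has two genuine gaps, and both are filled by ideas you do not mention.

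First, there is no ``admissible $L^*$'' in case (i), and the remedy you sketch does not repair this. If you integrate $\int a^{1j}(t,x^1)u_{x^1x^j}v$ by parts only in $x^j$, you get $-\int a^{1j}u_{x^1}v_{x^j}$, which still carries an $x^1$-derivative on $u$; to reach $\int u\,(\cdot)$ you must integrate by parts in $x^1$, which produces the forbidden $a^{1j}_{x^1}$. Symmetrizing in $(i,j)$ changes nothing since $a^{1j}u_{x^1x^j}$ is already symmetric. The paper's device is different: it does \emph{not} try to write $\int(Lu)v=\int u\,L^*v$. Instead it solves $-v_t + a^{ij}v_{x^ix^j}=\phi$ for $v$ (this operator is covered by Theorem \ref{theorem03192007}, since its coefficients are measurable in $(t,x^1)$ and constant in $x'$), and then pairs the \emph{second derivative} $u_{x^1x^k}$, $k\ge 2$, against $\phi$. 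In the resulting integral each integration by parts can be performed in a direction in which the relevant coefficient is constant, so one obtains $\int u_{x^1x^k}\phi = \int(Lu)\,v_{x^1x^k}$ with no distributional derivatives of $a^{1j}$ ever appearing. This yields $\|u_{x^1x^k}\|_{L_{q,p}}\le N\|Lu\|_{L_{q,p}}$ for $k=2,\dots,d$, and that is all the duality argument produces.

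Second, the upgrade from this partial estimate to the full $W^{1,2}_{q,p}$ bound is not obtained by further pairings with $\psi_{x^ix^j}$ or by mollification. In case (i) the pairing only closes for $(i,j)=(1,k)$ with $k\ge 2$; for example pairing $u_{x^1x^1}$ requires two integrations by parts in $x^1$ through $a^{1j}(t,x^1)$, which fails. Mollifying the coefficients does not help either, since Theorem \ref{theorem03192007} is stated only for $q\ge p\ge 2$; smoothness of the coefficients does not remove that restriction. What the paper does is introduce the auxiliary operator $L_1u=u_t+a^{ij}(t)u_{x^ix^j}$ with $a^{ij}(t)=a^{ij}(t,0)$, which has coefficients depending only on $t$ and hence satisfies the full $W^{1,2}_{q,p}$ estimate for all $1<q,p<\infty$ by \cite{MR1935919} or \cite{MR1828321}. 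Since $L_1u - Lu = 2\sum_{j\ge 2}\bigl(a^{1j}(t)-a^{1j}(t,x^1)\bigr)u_{x^1x^j}$ involves only the derivatives just estimated, the triangle inequality closes the argument. Case (ii) works the same way with $u_{x^kx^l}$, $k,l\ge 2$, and the corresponding difference formula.

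So while the idea of ``duality using the dual-exponent case of Theorem \ref{theorem03192007}'' is correct and in the spirit of the paper, your proposal is missing both the key observation about \emph{which} quantity to pair against the dual solution (specific second derivatives, not $u$ itself) and the auxiliary-operator comparison that supplies the remaining parts of the norm.
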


\begin{proof}
Without loss of generality, we assume that $b^i = c = 0$.
Moreover, it is enough to prove the estimate in the theorem.
Let $u$ be such that 
$u \in W_{q,p}^{1,2}((0,T) \times \bR^d)$ and $u(T,x) = 0$.

\noindent{\bf Case 1}.
Let the coefficients $a^{ij}$ of $L$ satisfy the assumption \eqref{05252007_01}.
For $\phi \in C_0^{\infty}((0,T) \times \bR^d)$,
find $v \in W_{q',p'}^{1,2}((0,T) \times \bR^d)$, $q' = q/(q-1)$,
$p' = p/(p-1)$ such that $v(0,x) = 0$ and
$$
-v_t + a^{ij}(t,x)v_{x^ix^j} = \phi.
$$
This is possible due to Theorem \ref{theorem03192007}
along with the fact that $2 \le p' \le q'$.
Observe that 
\begin{multline}							\label{05252007_03}
\int_{(0,T) \times \bR^d}
u_{x^1x^k} \phi \, dx \, dt
= \int_{(0,T) \times \bR^d} u_{x^1x^k} \left(-v_t + a^{ij}(t,x)v_{x^ix^j}\right) \, dx \, dt
\\
= \int_{(0,T) \times \bR^d} \left(u_t + a^{ij}(t,x) u_{x^ix^j}\right)v_{x^1x^k} \, dx \, dt
\end{multline}
for $k = 2, \cdots, d$.
Indeed, the second equality above is obtained using the fact that
$a^{ij}(t,x)$ are independent of $x \in \bR^d$ if $i=j=1$
or $i,j \in \{2,\cdots,d\}$
and $a^{1j}(t,x) = a^{1j}(t,x^1)$ if $j = 2,\cdots,d$.
Especially,
$$
\int_{(0,T) \times \bR^d} u_{x^1x^k} a^{1j}(t,x) v_{x^1x^j} \, dx \, dt
=\int_{(0,T) \times \bR^d} u_{x^1x^k} a^{1j}(t,x^1) v_{x^1x^j} \, dx \, dt
$$
$$
= \int_{(0,T) \times \bR^d} u_{x^1x^j} a^{1j}(t,x) v_{x^1x^k} \, dx \, dt,
\quad j,k = 2,\cdots,d.
$$
Therefore, we have
$$
\int_{(0,T) \times \bR^d}
u_{x^1x^k} \phi \, dx \, dt
\le \| Lu \|_{L_{q,p}((0,T) \times \bR^d)} 
\|v_{xx}\|_{L_{q',p'}((0,T) \times \bR^d)}
$$
$$
\le 
N \| Lu \|_{L_{q,p}((0,T) \times \bR^d)}
\| \phi \|_{L_{q',p'}((0,T) \times \bR^d)}.
$$
where the last inequality is due to Theorem \ref{theorem03192007}.
This implies that
\begin{equation}							\label{05182007_01}
\| u_{x^1x^k} \|_{L_{q,p}((0,T) \times \bR^d)}
\le N \| Lu \|_{L_{q,p}((0,T) \times \bR^d)},
\quad
k = 2, \cdots, d.
\end{equation}

Now we set 
$$
L_1 u := u_t + a^{ij}(t) u_{x^ix^j},
$$
where $a^{ij}(t) = a^{ij}(t,0)$.
Note that $a^{ij}(t)$ are independent of $x \in \bR^d$,
thus by results in \cite{MR1935919} or \cite{MR1828321}
we have
\begin{equation}							\label{05182007_02}
\| u \|_{W_{q,p}^{1,2}((0,T) \times \bR^d)}
\le N \| L_1u \|_{L_{q,p}((0,T) \times \bR^d)}.
\end{equation}
We see that
$$
L_1 u = L u + 2 \sum_{j=2}^{d} \left( a^{1j}(t) - a^{1j}(t,x^1) \right) u_{x^1x^j}.
$$
This along with \eqref{05182007_01} and \eqref{05182007_02} implies the estimate \eqref{05252007_04}.

\noindent{\bf Case 2}. Now assume that $a^{ij}$ satisfy the assumption \eqref{05252007_02}.
In this case,
since $a^{1j}$, $j=1,\cdots,d$ are independent of $x \in \bR^d$
and $a^{ij}$, $i,j \ge 2$, are independent of $x' \in \bR^{d-1}$,
we see that the integrations by parts in \eqref{05252007_03} 
are possible for $u_{x^kx^l}$, $k, l = 2, \cdots, d$.
Thus we have estimates as in \eqref{05182007_01}
for $u_{x^kx^l}$, $k, l = 2, \cdots, d$.
Then the proof can be completed by
repeating the argument using $L_1$ as above.
Especially, we see 
$$
L_1 u = L u + \sum_{i,j=2}^{d} \left( a^{ij}(t) - a^{ij}(t,x^1) \right) u_{x^ix^j}.
$$
The theorem is proved.
\end{proof}

Next two theorems concern Dirichlet or oblique derivative problems for parabolic equations defined in a half-space. 
Depending on the range of $q$ and $p$, we consider the following coefficients $a^{ij}(t,x)$ of the operator $L$:
\begin{itemize}
\item[(i)]
If $q \ge p \ge 2$, the coefficients
$a^{ij}(t,x)$ satisfy Assumption \ref{assum_01} and \ref{assum_02}.
In addition, if $p = 2$,
the coefficients are independent of $x' \in \bR^{d-1}$.
Especially, $a^{11}(t,x^1)$ is measurable in $t$ and VMO in $x^1 \in \bR$
if $p = 2$.
\item[(ii)]
If $1<q \le p \le 2$,
the coefficients $a^{ij}(t,x)$ are measurable functions of only $t \in \bR$ satisfying Assumption \ref{assum_01}.
\end{itemize}

\begin{remark}
More precisely, in case $1<q \le p \le 2$,
the coefficients $a^{1j}$, $j = 2, \cdots, d$ are allowed to be measurable functions of $(t,x^1) \in \bR^2$. 
Moreover, if $a^{1j} = 0$, $j = 2, \cdots, d$, 
then the coefficients $a^{ij}$, $i,j \ge 2$, can be measurable functions of $(t,x^1) \in \bR^2$.
See the proof of the following theorem as well as Theorem \ref{theorem05212007}.
\end{remark}

\begin{theorem}							\label{theorem05132007}
Let $0 < T < \infty$.
Assume that either we have $1<q \le p \le 2$ 
or $2 \le p \le q$.
Then for any $f \in L_{q,p}((0,T) \times \bR^d_{+})$, 
there exists a unique $u \in W_{q,p}^{1,2}((0,T) \times \bR^d_+)$ such that 
$u(T,x) = u(t,0,x') = 0$ and $Lu = f$ in $(0,T) \times \bR^d_+$.
\end{theorem}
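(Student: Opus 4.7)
\smallskip

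\noindent\emph{Proof proposal.} The plan is to reduce the half-space problem to the whole-space results of Theorems~\ref{theorem03192007} and~\ref{theorem05212007} by odd reflection across $\{x^1 = 0\}$, the strategy foreshadowed in the introduction.

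First I would extend $f$ as an odd function $\tilde f$ of $x^1$ to $(0,T)\times\bR^d$ and extend the coefficients so that the resulting operator $\tilde L$ maps functions odd in $x^1$ to functions odd in $x^1$. A direct parity computation forces the following: $a^{11}$, $a^{ij}$ with $i,j \ge 2$, $b^j$ with $j \ne 1$, and $c$ are extended as even in $x^1$; $a^{1j}$ with $j \ne 1$ and $b^1$ as odd in $x^1$. The essential observation, already highlighted in the introduction, is that the odd extensions of $a^{1j}$ are typically discontinuous across $\{x^1 = 0\}$ even when the originals are constant, but this is harmless: the hypotheses of Theorem~\ref{theorem03192007} (resp.\ Theorem~\ref{theorem05212007}) only require those $a^{ij}$ to be measurable in $(t,x^1)$ and $VMO$ in $x'$ (resp.\ to fit class~\eqref{05252007_01}), both of which are preserved under the reflection. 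For case~(i) one must additionally verify that the even extension of $a^{11}$ remains $VMO_x$ on all of $\bR^d$: balls lying entirely in a half-space inherit the original bound, and balls straddling $\{x^1 = 0\}$ are compared with symmetric balls of twice the radius, yielding $\omega(R)$ up to a multiplicative constant. For case~(ii) the original coefficients depend only on $t$, so their odd/even extensions are at worst functions of $(t,x^1)$, which fit directly into the class~\eqref{05252007_01} of Theorem~\ref{theorem05212007}.

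Once the extended coefficients are in the right class, Theorem~\ref{theorem03192007} or~\ref{theorem05212007} yields a unique $\tilde u \in \WO{1,2}{q,p}((0,T)\times\bR^d)$ solving $\tilde L \tilde u = \tilde f$. To recover the boundary condition, I would define $\tilde u^{\sharp}(t,x^1,x') := -\tilde u(t,-x^1,x')$; by the parity choices above, $\tilde u^{\sharp}$ also satisfies $\tilde L \tilde u^{\sharp} = \tilde f$ with the same terminal condition, so whole-space uniqueness forces $\tilde u^{\sharp} = \tilde u$, that is, $\tilde u$ is odd in $x^1$. Its restriction $u := \tilde u|_{(0,T)\times\bR^d_+}$ then solves $Lu = f$, satisfies $u(T,\cdot) = 0$, and has vanishing trace on $\{x^1 = 0\}$ as the restriction of an odd Sobolev function. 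Uniqueness in the half-space follows by the same device: any two candidate solutions admit odd extensions solving the same whole-space equation, and whole-space uniqueness forces them to coincide.

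The main obstacle I anticipate lies in case~(i): verifying that the even reflection of $a^{11}$ has $VMO_x$ modulus on $\bR^d$ controlled by that of the original. The straddling-ball case requires a careful comparison between the oscillation of the reflected function on $Q_r(t,x)$ with $|x^1| < r$ and the oscillation of the original on a symmetric parabolic cylinder of comparable radius, so that Assumption~\ref{assum_02} survives the extension with $\omega(R)$ replaced by at most $C\omega(2R)$. A secondary delicate point is rigorously interpreting the vanishing boundary trace of $\tilde u$ on $\{x^1 = 0\}$ in the mixed-norm Sobolev space $W^{1,2}_{q,p}$; this is intuitively clear but is best justified via a trace theorem for mixed-norm spaces or by mollification in $x^1$ and passage to the limit.
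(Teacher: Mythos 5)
Your proof is correct and follows essentially the same approach as the paper's: odd reflection of $f$, even/odd extension of the coefficients (even for $a^{11}$, $a^{ij}$ with $i,j\ge 2$, $b^i$ with $i\ge 2$, $c$; odd for $a^{1j}$ with $j\ge 2$ and $b^1$) to preserve the hypotheses of Theorems~\ref{theorem03192007} and~\ref{theorem05212007}, and the parity--uniqueness argument identifying the whole-space solution with its odd reflection so that its restriction solves the half-space problem. The points you flag as delicate --- that the even extension of $a^{11}$ remains $VMO_x$ with modulus a dilated multiple of $\omega$, and that the vanishing trace on $\{x^1=0\}$ follows from oddness of a $W^{1,2}_{q,p}$ function --- are precisely the points the paper's proof asserts (with $N\omega(3t)$).
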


\begin{proof}
Introduce a new operator $\hat{L}v = \hat{a}^{ij} v_{x^i x^j} + \hat{b} v_{x^i} + \hat{c} v$, where $\hat{a}^{ij}$, $\hat{b}^{i}$, and $\hat{c}$ are defined as either even or odd extensions of $a^{ij}$, $b^{j}$, and $c$.
Specifically, 
for $i = j = 1$ and $i,j \in \{ 2, \dots, d \}$, even extensions:
$$
\hat{a}^{ij} = a^{ij}(t,x^1,x') 
\quad x^1 \ge 0,
\qquad
\hat{a}^{ij} = a^{ij}(t,-x^1,x')
\quad x^1 < 0.
$$
For $j = 2, \dots, d$, odd extensions:
$$
\hat{a}^{1j} = a^{1j}(t,x^1,x') 
\quad x^1 \ge 0,
\qquad
\hat{a}^{1j}= - a^{1j}(t,-x^1,x')
\quad x^1 < 0.
$$ Also set $\hat{a}^{j1} = \hat{a}^{1j}$. Similarly, $\hat{b}^1$ is the
odd extension of $b^1$, and $\hat{b}^{i}$, $i = 2, \dots, d$, and
$\hat{c}$ are even extensions of $b^{i}$ and $c$ respectively. We see
that the coefficients $\hat{a}^{ij}$, $\hat{b}^{i}$, and $\hat{c}$
satisfy Assumption \ref{assum_01}.  In addition, if $q \ge p \ge 2$, the
coefficients $\hat{a}^{ij}$ satisfy Assumption \ref{assum_02} 
with $N\omega(3 t)$, where $N$ depends only on $d$. 
Especially, $\hat{a}^{11}$ is VMO in $x \in \bR^d$.

For $f \in L_p((0,T) \times \bR^d_+)$, set $\hat{f}$ to be the odd extension of $f$.  Then it follows from Theorem
\ref{theorem03192007} or Theorem \ref{theorem05212007}
that there exists a unique $u
\in \WO{1,2}{q,p}((0,T) \times \bR^d)$ to the equation $\hat{L}u = \hat{f}$. 
It is easy to
check that $-u(t,-x^1,x') \in \WO{1,2}{q,p}((0,T) \times \bR^d)$ also satisfies the same equation, so by uniqueness we have $u(t,x^1,x') = -u(t,-x^1,x')$.
This and the fact that $u \in \WO{1,2}{q,p}((0,T) \times \bR^d)$ show that $u$,
as a function defined on $(0,T) \times \bR^{d}_{+}$, is a solution to $Lu
= f$ satisfying $u = 0$ on $\{(T,x):x\in\bR^d\}$ and $\{(t,0,x'): 0 \le t \le T, x' \in \bR^{d-1}\}$.

Uniqueness follows from the fact that the odd extension of a solution $u$
belongs to $\WO{1,2}{q,p}((0,T) \times \bR^d)$ and the uniqueness of solutions to
equations in $(0,T) \times \bR^d$.
\end{proof}

This theorem addresses the oblique derivative problem.

\begin{theorem}							\label{theorem05172007}
Let $p$, $q$, and 
$a^{ij}$ be as in Theorem \ref{theorem05132007}.
Let $\ell = (\ell^1, \cdots, \ell^d)$ be a vector in $\bR^d$ with $\ell^1 > 0$.
Then for any $f \in L_{q,p}((0,T) \times \bR^d_+)$, there exists a unique $u \in W_{q,p}^{1,2}((0,T) \times \bR^d_+)$ satisfying
$Lu = f$ in $(0,T) \times \bR^{d}_{+}$,
$\ell^j u_{x^j} = 0$ on $\{(t,0,x'): 0 \le t \le T, x' \in \bR^{d-1}\}$,
and 
$u = 0$ on $\{(T,x): x \in \bR^d\}$.
\end{theorem}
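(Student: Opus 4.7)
The plan is to reduce the oblique derivative problem to a Neumann-type problem ($\tilde u_{y^1}=0$ on the boundary) by a linear change of variables in $x$, and then mimic the reflection argument of Theorem \ref{theorem05132007}, replacing the odd extension by an even one.

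First I would set $y^1=x^1/\ell^1$ and $y^k=x^k-(\ell^k/\ell^1)x^1$ for $k=2,\dots,d$. Since $\ell^1>0$, this is a linear automorphism of $\bR^d$ preserving $\bR^d_+$, and for $\tilde u(t,y):=u(t,x(y))$ a direct chain-rule calculation gives $\ell^i u_{x^i}=\tilde u_{y^1}$, so the oblique condition becomes $\tilde u_{y^1}(t,0,y')=0$. The equation $Lu=f$ transforms to $\tilde L\tilde u=\tilde f$, where $\tilde f(t,y)=f(t,x(y))$ and $\tilde L$ has principal coefficients $\tilde a^{mn}(t,y)=\sum_{i,j}a^{ij}(t,x(y))(\partial y^m/\partial x^i)(\partial y^n/\partial x^j)$. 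Before going further I would verify that these $\tilde a^{mn}$ still satisfy the required structural hypotheses: ellipticity is preserved up to a constant depending only on $\ell$ and $\delta$; in the range $1<q\le p\le 2$ the new coefficients are again measurable functions of $t$ alone; in the range $q\ge p\ge 2$, the key observation is that $x^1=\ell^1y^1$ depends only on $y^1$, while for each fixed $y^1$ the map $y'\mapsto x'$ is merely a translation in $\bR^{d-1}$, so Assumption \ref{assum_02} transfers (with a possibly enlarged modulus). In particular $\tilde a^{11}=a^{11}/(\ell^1)^2$, composed with the linear bijection, is still VMO in $y$.

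Next, exactly as in the proof of Theorem \ref{theorem05132007}, I would extend $\tilde a^{ij}$, $\tilde b^i$, $\tilde c$ to all of $\bR^{d+1}$ using odd extensions in $y^1$ for $\tilde a^{1j}$ ($j\ge 2$) and $\tilde b^1$, and even extensions of the remaining coefficients; let $\hat L$ denote the resulting operator, and let $\hat f$ be the \emph{even} extension of $\tilde f$ in $y^1$. The same parity bookkeeping as in Theorem \ref{theorem05132007} shows that whenever $w$ is even in $y^1$, so is $\hat Lw$, and the extended coefficients still satisfy the hypotheses of Theorem \ref{theorem03192007} (or of Theorem \ref{theorem05212007} in the lower range, where the coefficients depend only on $t$ so any of the structural assumptions \eqref{05252007_01}, \eqref{05252007_02} hold trivially). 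Applying the relevant theorem yields a unique $w\in\WO{1,2}{q,p}((0,T)\times\bR^d)$ with $\hat Lw=\hat f$. Since $(t,y^1,y')\mapsto w(t,-y^1,y')$ also lies in this space and solves the same equation with the same right-hand side, uniqueness forces $w$ to be even in $y^1$; consequently $w_{y^1}$ is odd, and its trace on $\{y^1=0\}$ (which exists because $w_{y^1}$ has a weak $y^1$-derivative in $L_{q,p}$) vanishes. Restricting $w$ to $\bR^d_+$ and undoing the change of variables produces the desired $u$ with $\ell^i u_{x^i}(t,0,x')=0$ and $u(T,x)=0$.

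Uniqueness is immediate: if $u_1,u_2$ both solve the problem, then pulling back via the change of variables and even-extending in $y^1$ gives an element of $\WO{1,2}{q,p}((0,T)\times\bR^d)$ satisfying $\hat Lw=0$, and this must vanish by the whole-space theorem. The main technical obstacle is the verification in the second paragraph, in the partial-VMO regime: one must confirm that composition with the linear map $y\mapsto x(y)$, which mixes $y^1$ into the $x^k$-directions, preserves the delicate hierarchy ``measurable in $(t,x^1)$, VMO in $x'$'' versus ``measurable in $t$, VMO in $x$'' that distinguishes $a^{11}$ from the other entries. This reduces to translation-invariance of VMO together with the fact that the $y^1\mapsto x^1$ map is one-dimensional, but it is the only step that requires genuine work; everything else is either a clean adaptation of the proof of Theorem \ref{theorem05132007} or a citation of Theorem \ref{theorem03192007}/\ref{theorem05212007}.
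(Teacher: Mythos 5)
Your proposal matches the paper's proof essentially verbatim: the change of variables you write ($y^1=x^1/\ell^1$, $y^k=x^k-(\ell^k/\ell^1)x^1$, whose inverse is $x=(\ell^1 y^1,\ell' y^1+y')$) is exactly the map $\varphi$ used in the paper, the reduction to a zero Neumann condition via $\tilde u_{y^1}=\ell^i u_{x^i}$ is the same, and the subsequent even reflection of the data and parity/uniqueness argument are carried out just as in the paper's proof of Theorem \ref{theorem05132007}, with the same remark about Assumption \ref{assum_02} transferring with a modified modulus $N\omega(N\cdot)$. The argument is correct and takes the same route.
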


\begin{proof}
Let
$\varphi(x) = (\ell^1 x^1, \ell'x^1 + x')$, 
where $\ell' = (\ell^2, \dots, \ell^d)$.
Using this linear transformation and its inverse,
we reduce the above problem to a problem with zero Neumann boundary condition on 
$\{(t,0,x'): 0 \le t \le T, x' \in \bR^{d-1}\}$.
Note that, in case $q \ge p \ge 2$, 
the coefficients of the transformed equation satisfy Assumption \ref{assum_02} with $N \omega(N t)$, where $N$ depends only on $d$ and $\ell$.
Then the problem is solved as in the proof of Theorem \ref{theorem05132007} with the even extension of $f$.
\end{proof}

\begin{remark}
Appropriate $L_{q,p}$-estimates as in Theorem \ref{theorem03192007}
can be added to the above two theorems.
\end{remark}

\section{Parabolic equations in $L_p$} \label{section04092007}

In this section we prove Theorem \ref{theorem03192007} for the case $p = q > 2$.
In fact, we prove Theorem \ref{theorem05192007_01} below,
which implies Theorem \ref{theorem03192007} if $p = q > 2$.
As in Theorem \ref{theorem03192007},
we assume that the coefficients $a^{ij}$, $b^i$, and $c$ of $L$
satisfy Assumption \ref{assum_01} and \ref{assum_02}.

\begin{theorem}							\label{theorem05192007_01}
Let $p > 2$, $T \in [-\infty, \infty)$, and the coefficients of $L$ satisfy
Assumption \ref{assum_01} and \ref{assum_02}.
Then there exist constants $\lambda_0$ and $N$,
depending only on $d$, $p$, $\delta$, $K$, and the function $\omega$,
such that,
for any $\lambda \ge \lambda_0$ and $u \in W_p^{1,2}((T,\infty)\times\bR^d)$,
$$
\|u_t\|_{L_p((T,\infty)\times\bR^d)} + \|u_{xx}\|_{L_p((T,\infty)\times\bR^d)}
+ \sqrt{\lambda} \|u_x\|_{L_p((T,\infty)\times\bR^d)}
$$
$$
+ \lambda \| u \|_{L_p((T,\infty)\times\bR^d)}
\le N \|L u - \lambda u\|_{L_p((T,\infty)\times\bR^d)}.
$$
Moreover,
for any $\lambda > \lambda_0$
and $f \in L_p((T,\infty)\times\bR^d)$,
there exists a unique solution $u \in W_p^{1,2}((T,\infty)\times\bR^d)$
to the equation $Lu - \lambda u = f$.
\end{theorem}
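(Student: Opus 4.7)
The plan is to follow the mean-oscillation / sharp-function scheme developed by Krylov for parabolic equations with $VMO_x$ coefficients, adapted to the partially measurable setting here. Solvability for $\lambda > \lambda_0$ reduces to the a priori estimate by the method of continuity, connecting $L-\lambda$ to the shifted heat operator $\partial_t + \Delta - \lambda$ through a homotopy whose members all satisfy Assumption \ref{assum_01} and \ref{assum_02} uniformly. The $\sqrt\lambda\|u_x\|$ and $\lambda\|u\|$ terms drop out of the $\|u_t\|+\|u_{xx}\|$ bound by parabolic rescaling, and the lower-order terms $b^i u_{x^i}, cu$ are absorbed on the left once $\lambda$ is large, so it suffices to prove $\|u_t\|_{L_p}+\|u_{xx}\|_{L_p}\le N\|Lu-\lambda u\|_{L_p}$ when $b^i=c=0$.

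The core of the argument is a pointwise sharp-function bound of the form
\[
(u_{xx})^{\#}(t,x)\le N\bigl(\cM(|Lu-\lambda u|^2)(t,x)\bigr)^{1/2}+ N\kappa\,\bigl(\cM(|u_{xx}|^2)(t,x)\bigr)^{1/2},
\]
where $(\cdot)^{\#}$ and $\cM$ are the parabolic sharp and maximal functions over the cylinders $\Lambda_r$, and $\kappa$ can be made arbitrarily small by choosing the sharp-function radius $R$ small, using $a_R^{\#}\le \omega(R)\to 0$. Since $p>2$, Fefferman--Stein and Hardy--Littlewood both apply to $L_{p/2}$ functions, yielding $\|u_{xx}\|_{L_p}\le N\|Lu-\lambda u\|_{L_p}+N\kappa\|u_{xx}\|_{L_p}$ and absorption of the last term. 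The restriction $p>2$ is precisely what allows an $L_2$-type mean-oscillation bound to feed into the maximal-function machinery, matching the hypothesis of the theorem.

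To produce the sharp-function inequality I would freeze coefficients on each cylinder: replace $a^{11}$ by its average $\bar a^{11}(t)$ over $B_R(x_0)$ (legitimate because $a^{11}$ is $VMO_x$) and each other $a^{ij}$ by its average $\bar a^{ij}(t,x^1)$ over $B'_R(x'_0)$ (legitimate by the $VMO_{x'}$ condition). The perturbation $(a^{ij}-\bar a^{ij})u_{x^ix^j}$ contributes a term bounded, after integration over the cylinder, by $a_R^{\#}\,\cM(|u_{xx}|^2)^{1/2}$. What remains is an interior mean-oscillation decay estimate
\[
\bigl(|u_{xx}-(u_{xx})_{\Lambda_r}|\bigr)_{\Lambda_r}\le N(r/R)^{\alpha}\bigl(|u_{xx}|^2\bigr)^{1/2}_{\Lambda_R}
\]
for solutions $v$ of the model equation $v_t+\bar a^{11}(t)v_{x^1x^1}+\sum_{(i,j)\ne(1,1)}\bar a^{ij}(t,x^1)v_{x^ix^j}-\lambda v=0$. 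Tangential differentiation in $x'$ is free and delivers $L_2$ control of $v_{x'x}$, while $v_{x^1x^1}$ is recovered algebraically from the equation since $\bar a^{11}$ is positive and depends only on $t$; the underlying $L_2$ solvability in this partially measurable class is the result cited in Remark \ref{remark05212007} from \cite{Doyoon&Krylov:parabolic:2006}.

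The main obstacle is the interior decay estimate itself in the partially measurable $x^1$-direction. Because $x^1$ is measurable one cannot differentiate the equation in $x^1$, so H\"older control of $v_{x^1x^1}$ must be inferred from H\"older control of the tangential second derivatives and of $v_t$ via the equation, requiring a careful bootstrap from $L_2$ bounds to pointwise oscillation decay on smaller cylinders. Once this decay estimate is secured, the Krylov sharp-function argument of \cite{Krylov_2005, Krylov_2007_mixed_VMO} closes the proof, completing both the a priori estimate and, through the method of continuity, the unique solvability statement.
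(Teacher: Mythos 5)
Your overall architecture (coefficient freezing, sharp-function / Fefferman--Stein machinery, smallness via $a_R^\#\le\omega(R)$, method of continuity, and the role of $p>2$) matches the paper's, but there is a genuine gap at the central step: you aim for a pointwise sharp-function bound on \emph{all} of $u_{xx}$, whereas the paper's Theorem~\ref{main_sharp} produces such a bound only for $u_{xx'}$, i.e.\ second derivatives involving at least one tangential direction. This restriction is not cosmetic. In your frozen model $\bar L_0 v = v_t + \bar a^{11}(t)v_{x^1x^1} + \sum_{(i,j)\ne(1,1)}\bar a^{ij}(t,x^1)v_{x^ix^j}$ the coefficients $\bar a^{ij}(t,x^1)$, $i\ne1$ or $j\ne1$, are merely \emph{measurable} in $x^1$, so the decay estimate
$\bigl(|v_{xx}-(v_{xx})_{\Lambda_r}|\bigr)_{\Lambda_r}\le N(r/R)^{\alpha}\bigl(|v_{xx}|^2\bigr)^{1/2}_{\Lambda_R}$
that you invoke for $\bar L_0 v=0$ is false. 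Your own attempted repair --- ``recover $v_{x^1x^1}$ algebraically from the equation'' --- does not work: from
$v_{x^1x^1} = (\bar a^{11}(t))^{-1}\bigl(-v_t - \sum_{(i,j)\ne(1,1)}\bar a^{ij}(t,x^1)v_{x^ix^j}\bigr)$
the right-hand side inherits the mere measurability of $\bar a^{ij}(\cdot,x^1)$ in $x^1$, so $v_{x^1x^1}$ has no mean-oscillation decay in $x^1$ even when $v_t$ and $v_{xx'}$ are smooth. Thus a sharp-function inequality $(u_{xx})^\#\le\dots$ cannot be obtained by this route.

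The paper handles $u_{x^1x^1}$ by an entirely separate device. After the Fefferman--Stein step one has only $\|u_{xx'}\|_{L_p}\le N(a_R^\#)^{\alpha/\nu}\|u_{xx}\|_{L_p}+N\|L_0u\|_{L_p}^{2\alpha}\|u_{xx}\|_{L_p}^{2\beta}$. One then writes
$u_t + a^{11}(t,x)u_{x^1x^1} + \Delta_{d-1}u = g,\qquad g = L_0u + \Delta_{d-1}u - \sum_{i\ne1,\,j\ne1}a^{ij}u_{x^ix^j},$
observes $\|g\|_{L_p}\le N(\|L_0u\|_{L_p}+\|u_{xx'}\|_{L_p})$, and applies the $L_p$ estimate for the auxiliary operator $L_1=\partial_t+a^{11}\partial_{x^1}^2+\Delta_{d-1}$ with $a^{11}$ in $VMO_x$ (Corollary~3.7 of \cite{Krylov_2005}). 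Combining the two gives $\|u_{xx}\|_{L_p}\le N\|L_0u\|_{L_p}$ after absorbing the small term. Your proposal is missing this second, independent ingredient; without it (or an equivalent), the bound on $u_{x^1x^1}$ never appears, and the a priori estimate is not closed.
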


A proof of this theorem is given at the end of this section after a sequence of auxiliary results.
The first result is a lemma which deals with an operator whose coefficients are measurable functions of only $(t,x^1) \in \bR^2$ (except $a^{11}$).
Set
$$
\bar{L}_0 u = u_t + \bar{a}^{ij}(t,x^1) u_{x^i x^j},
$$
where $\bar{a}^{11}(t)$ is a function of only $t \in \bR$
and $\bar{a}^{ij}$, $i \ne 1$ or $j \ne 1$, 
are functions of $(t,x^1) \in \bR^2$.
The coefficients $\bar{a}^{ij}$ satisfy Assumption \ref{assum_01}.

\begin{lemma} \label{lemma03282007_100}
Let $p \ge 2$.
There is a constant $N$, depending only on $d$, $p$, and $\delta$,
such that, for any $u \in W_p^{1,2}(\bR^{d+1})$,
$r \in (0, \infty)$, and $\kappa \ge 8/\delta$,
$$
\dashint_{Q_r} | u_{xx'}(t,x) - \left( u_{xx'} \right)_{Q_r} |^p
\, dx \, dt
\le N \kappa^{d+2} \left( |\bar{L}_0 u|^p \right)_{Q_{\kappa r}}
+ N \kappa^{-\nu p} \left( |u_{xx}|^p \right)_{Q_{\kappa r}},
$$
where $\nu = 1/2 - 3/(4p)$.
\end{lemma}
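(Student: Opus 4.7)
The plan is to follow Krylov's mean-oscillation strategy, splitting $u$ on the larger cylinder $Q_{\kappa r}$ into an inhomogeneous part that absorbs all of $\bar L_0 u$ and a homogeneous part whose extra regularity is exploited through the fact that the coefficients do not depend on $x'$. Concretely, let $v \in W_p^{1,2}(Q_{\kappa r})$ solve $\bar L_0 v = \bar L_0 u$ in $Q_{\kappa r}$ with $v=0$ on the parabolic boundary, and set $w = u - v$, so that $\bar L_0 w = 0$ in $Q_{\kappa r}$. Since $u_{xx'} - (u_{xx'})_{Q_r} = (v_{xx'} - (v_{xx'})_{Q_r}) + (w_{xx'} - (w_{xx'})_{Q_r})$, a convexity inequality gives
\[
\dashint_{Q_r} |u_{xx'} - (u_{xx'})_{Q_r}|^p \, dx \, dt \le N \dashint_{Q_r} |v_{xx'}|^p \, dx \, dt + N \dashint_{Q_r} |w_{xx'} - (w_{xx'})_{Q_r}|^p \, dx \, dt,
\]
and the two terms are estimated by entirely different mechanisms.

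For the $v$-piece, the coefficients of $\bar L_0$ are measurable in $(t,x^1)$ with $\bar a^{11}$ depending only on $t$, so this operator belongs to the simpler class for which $L_p$-solvability with zero parabolic boundary data on a cylinder is already known (see \cite{Doyoon&Krylov:parabolic:2006, Doyoon:parabolic:mixed:2006}). That solvability, together with the fact that the constant in $\|v_{xx}\|_{L_p(Q_{\kappa r})} \le N \|\bar L_0 u\|_{L_p(Q_{\kappa r})}$ is scale-invariant, combined with the volume ratio $|Q_{\kappa r}|/|Q_r| = \kappa^{d+2}$, yields the first term $N \kappa^{d+2} (|\bar L_0 u|^p)_{Q_{\kappa r}}$ claimed in the lemma.

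For the $w$-piece, the key structural fact is that no coefficient of $\bar L_0$ depends on $x'$: differentiating the homogeneous equation in any tangential direction $x^j$, $j\ge 2$, shows that $w_{x^j}$, and inductively any $x'$-derivative of $w$, again satisfies $\bar L_0 (\cdot) = 0$ in $Q_{\kappa r}$. Applying the standard interior $L_p$-estimate for $\bar L_0$ to these differentiated equations on a slightly smaller cylinder promotes $w$ to a function that is arbitrarily smooth in $x'$, and this in turn delivers a pointwise estimate on the mixed second derivative $w_{xx'}$ on a cylinder comparable to $Q_{\kappa r/2}$, equipped with a Hölder-type modulus in the parabolic metric. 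Translating this modulus into the mean oscillation over $Q_r$, using parabolic scaling by the ratio $r/(\kappa r) = 1/\kappa$, and finally estimating $|w_{xx}| \le |u_{xx}| + |v_{xx}|$ with the previous $v$-bound absorbed into the first term, gives the second term $N \kappa^{-\nu p} (|u_{xx}|^p)_{Q_{\kappa r}}$.

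The main obstacle lies in proving the pointwise/Hölder estimate for $w_{xx'}$ with the precise exponent $2\nu$ needed to produce the factor $\kappa^{-\nu p}$, where $\nu = 1/2 - 3/(4p)$. The delicate piece is the genuinely mixed derivative $w_{x^1 x^j}$ for $j \ge 2$, since $\bar a^{ij}$ is only measurable in $x^1$ for $(i,j) \ne (1,1)$, which rules out any direct DeGiorgi--Nash--Moser argument on the full Hessian. One must instead trade the free tangential smoothness against the weak regularity in $x^1$, by combining an $L_2$-based interior identity (made available by the special algebraic structure $\bar a^{11} = \bar a^{11}(t)$, which permits one to solve for $w_{x^1 x^1}$ in terms of $w_t$ and the remaining $w_{x^i x^j}$) with parabolic Sobolev embedding in the $x'$-variables. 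Balancing these two ingredients under parabolic scaling is precisely what produces the explicit exponent $\nu = 1/2 - 3/(4p)$, and this interpolation step, rather than the decomposition itself, is where the real work of the lemma resides.
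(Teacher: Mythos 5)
Your outline uses the standard Krylov mean-oscillation decomposition $u=v+w$ on $Q_{\kappa r}$, which is conceptually reasonable and is in fact the machinery that underlies Theorem~5.9 of \cite{Doyoon:parabolic:mixed:2006}. But the crucial technical step --- the H\"older-type estimate on $w_{xx'}$, for the homogeneous solution $w$, with precisely the exponent $2\nu$, $\nu = 1/2 - 3/(4p)$ --- is left as a sketch, and you yourself flag it as ``where the real work of the lemma resides.'' The ingredients you name (differentiating the equation in $x'$, solving for $w_{x^1x^1}$ using the special form of $\bar a^{11}$, Sobolev embedding in $x'$) are the right ones, but producing the specific exponent $\nu$ requires a careful interpolation/bootstrapping argument that occupies a substantial part of a section in the cited reference; naming the ingredients is not the same as proving the bound. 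In addition, the Cauchy--Dirichlet solvability of $\bar L_0$ on a bounded cylinder $Q_{\kappa r}$ that you invoke for the $v$-piece does not follow immediately from the whole-space results you cite and would need its own justification (e.g.\ by extension of the coefficients to $\bR^{d+1}$).

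The paper avoids redoing any of this work by a short time change. Because $\bar a^{11}$ depends only on $t$, one can choose $\varphi$ with $\varphi'(t)=1/\bar a^{11}(\varphi(t))$, set $w(t,x)=u(\varphi(t),x)$ and $\hat a^{ij}(t,x^1)=\bar a^{ij}(\varphi(t),x^1)/\bar a^{11}(\varphi(t))$. Then $\hat a^{11}\equiv 1$, and Theorem~5.9 of \cite{Doyoon:parabolic:mixed:2006} --- which is exactly this lemma when $\bar a^{11}$ is a function of $x^1$ alone, the constant case being a trivial instance --- applies directly to the transformed operator. Undoing the change of variables and using $\delta t\le \varphi(t)\le \delta^{-1}t$ to control the distortion of the parabolic cylinders yields the stated estimate with $\kappa\ge 8/\delta$. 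So rather than rebuilding the oscillation estimate from scratch, the efficient move is to observe that the $t$-dependence of $\bar a^{11}$ can simply be normalized away, after which the result reduces to a citation. Your proposal is a plausible \emph{route to reproving} the underlying estimate, but as written it contains a genuine gap where all the analytic difficulty lies.
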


\begin{proof}
It can be said that the lemma is proved by following the arguments in section 5 of the paper \cite{Doyoon:parabolic:mixed:2006}.
In fact, the above lemma would be the same as Theorem 5.9 in \cite{Doyoon:parabolic:mixed:2006}
if the coefficient $\bar{a}^{11}$ were a function of only $x^1 \in \bR$.
In our case, the coefficient $\bar{a}^{11}$ is a function of only $t \in \bR$.
Thus, instead of repeating the steps in \cite{Doyoon:parabolic:mixed:2006}
for the operator $\bar{L}_0$,
one can use a time change as well as Theorem 5.9 in \cite{Doyoon:parabolic:mixed:2006}.
Indeed, we can proceed as follows.

Without loss of generality we assume that
$\bar{a}^{ij}(t,x^1)$ are infinitely differentiable as functions of $t \in \bR$.
Especially, we may assume that the derivative of $\bar{a}^{11}(t)$ is bounded.
For example, we can consider
$$
\bar{a}^{ij}_{\varepsilon}(t,x^1) = \int_{\bR} \bar{a}^{ij}(s,x^1) \phi_{\varepsilon}(t-s) \, ds,
$$
where $\phi \in C_0^{\infty}(\bR)$ such that $\|\phi \|_{L_1(\bR)} = 1$.
Clearly the derivative of $\bar{a}^{11}_{\varepsilon}(t)$ is bounded by a constant depending on $\varepsilon$, but it will be seen that the constant $N$ in the desired estimate does not depend on $\varepsilon$.
Then we let $\varepsilon \searrow 0$.

The additional condition on $\bar{a}^{11}(t)$ assures that
there exists $\varphi(t)$ such that
$$
\varphi(t) = \int_0^t \frac{1}{\bar{a}^{11}\left(\varphi(s)\right)} \, ds.
$$
There also exists $\eta(t)$, the inverse function of $\varphi(t)$.
For $u \in W_p^{1,2}(\bR^{d+1})$, set
$w(t,x) = u(\varphi(t),x)$ and
$$
\fL w := w_t + \hat{a}^{ij}(t,x^1) w_{x^ix^j},
\quad
\hat{a}^{ij}(t,x^1) := \frac{\bar{a}^{ij}(\varphi(t),	x^1)}{\bar{a}^{11}(\varphi(t))}.
$$
Observe that $\hat{a}^{ij}$ are measurable functions of $(t,x^1) \in \bR^2$ satisfying Assumption \ref{assum_01} with $\delta^2$ in stead of $\delta$. 
Moreover, $\hat{a}^{11} = 1$.
Thus by Theorem 5.9 in \cite{Doyoon:parabolic:mixed:2006}
we have 
$$
\dashint_{Q_r} | w_{xx'}(t,x) - c |^p
\, dx \, dt
\le N \kappa^{d+2} \left( |\fL w|^p \right)_{Q_{\kappa r}}
+ N \kappa^{-\nu p} \left( |w_{xx}|^p \right)_{Q_{\kappa r}}
$$
for $r \in (0, \infty)$ and $\kappa \ge 8$,
where $c = \left(w_{xx'}\right)_{Q_r}$
and $N$ depends only on $d$, $p$, and $\delta$.
Using this inequality as well as an appropriate change of variable ($w(t,x) = u(\varphi(t),x)$),
we obtain
$$
r^{-2}\int_0^{\varphi(r^2)} \!\!\!\! \dashint_{B_r} | u_{xx'}(t,x) - c |^p
\, dx \, dt
\le
N (\kappa r)^{-2} \kappa^{d+2} \int_0^{\varphi((\kappa r)^2)} \!\!\!\! \dashint_{B_{\kappa r}}
| \bar{L}_0 u |^p \, dx \, dt
$$
$$
+ N (\kappa r)^{-2} \kappa^{-\nu p}
\int_0^{\varphi((\kappa r)^2)} \!\!\!\! \dashint_{B_{\kappa r}}
| u_{xx} |^p \, dx \, dt
$$
for $r \in (0, \infty)$ and $\kappa \ge 8$,
where $N = N(d, p, \delta)$.
From this inequality along with the facts that $\delta \in (0,1)$ and
$\delta t \le \varphi(t) \le \delta^{-1} t$,
it follows that 
$$
\dashint_{Q_{r\sqrt{\delta}}} | u_{xx'}(t,x) - c |^p \, dx \, dt
\le N \kappa^{d+2} \left( |\bar{L}_0 u|^p \right)_{Q_{\kappa r/\sqrt{\delta}}}
+ N \kappa^{-\nu p} \left( |u_{xx}|^p \right)_{Q_{\kappa r/\sqrt{\delta}}},
$$
where $N = N(d,p,\delta)$.
Replace $r \sqrt{\delta}$ with $r$ 
and $\kappa/\delta$ with $\kappa$ in the above inequality
(thus $\kappa \ge 8/\delta$).
Finally, observe that 
$$
\dashint_{Q_r} | u_{xx'}(t,x) - \left( u_{xx'} \right)_{Q_r} |^p \, dx \, dt
\le N(p) \dashint_{Q_r} | u_{xx'}(t,x) - c |^p \, dx \, dt.
$$
The lemma is proved.
\end{proof}

Let $\bQ$  be the collection of all $Q_r(t,x)$, $(t,x) \in \bR^{d+1}$,
$r \in (0, \infty)$.
For a function $g$ defined on $\bR^{d+1}$,
we denote its (parabolic) maximal and sharp function, respectively, by
$$
M g (t,x) = \sup_{(t,x) \in Q}
\dashint_{Q} | g(s,y) | \, dy \, ds,
$$
$$
g^{\#}(t,x) = \sup_{(t,x) \in Q}
\dashint_{Q} | g(s,y) - (g)_Q | \, dy \, ds,
$$
where the supremums are taken over all $Q \in \bQ$ containing $(t,x)$.
By $L_0$ we mean the operator $L$ with $b^i = c = 0$, i.e.,
$$
L_0 u = u_t + a^{ij}(t,x) u_{x^ix^j}.
$$

\begin{theorem} \label{main_sharp}
Let $\mu$, $\nu \in (1,\infty)$, $1/\mu + 1/\nu = 1$, and $R \in (0, \infty)$.
There exists a constant $N = N(d, \delta, \mu)$ such that,
for any $u \in C_0^{\infty}(\bR^{d+1})$ vanishing outside $Q_R$, 
we have
$$
(u_{xx'})^{\#} \le N ( a_R^{\#} )^{\frac{\alpha}{\nu}} 
\left[ M ( |u_{xx}|^{2 \mu} ) \right]^{\frac{1}{2\mu}}
+ N \left[ M ( |L_0 u|^{2} ) \right]^{\alpha}
\left[ M ( |u_{xx}|^{2} ) \right]^{\beta},
$$
where $\alpha = 1/(8d + 18)$ and $\beta = (4d + 8)/(8d + 18)$.
\end{theorem}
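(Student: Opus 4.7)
The plan is a coefficient-freezing argument combined with H\"older's inequality and the optimization of a free scaling parameter $\kappa$. Fix $(t_0,x_0)\in\bR^{d+1}$ and any $Q=Q_r(s,y)\in\bQ$ containing $(t_0,x_0)$; the goal is to bound $\dashint_Q|u_{xx'}-(u_{xx'})_Q|$ by the right-hand side of the theorem evaluated at $(t_0,x_0)$. For a parameter $\kappa\ge 8/\delta$ to be chosen, I define frozen coefficients: let $\bar a^{11}(t)$ be the average of $a^{11}(t,\cdot)$ over $B_{\kappa r}(y)$ and, for $(i,j)\ne(1,1)$, let $\bar a^{ij}(t,x^1)$ be the average of $a^{ij}(t,x^1,\cdot)$ over $B'_{\kappa r}(y')$. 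These still satisfy Assumption~\ref{assum_01} with the same $\delta$ by convexity, so Lemma~\ref{lemma03282007_100} applies to $\bar L_0$ on $Q_r(s,y)$ by translation.

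Using $L^1\hookrightarrow L^2$ on averages and Lemma~\ref{lemma03282007_100} with $p=2$ (whose small-oscillation exponent is then $1/8$), then splitting $\bar L_0 u = L_0 u+\sum_{i,j}(\bar a^{ij}-a^{ij})u_{x^ix^j}$ and applying H\"older with conjugate exponents $(\nu,\mu)$ on each coefficient-error term,
$$\big(|(\bar a-a)u_{xx}|^2\big)^{1/2}_{Q_{\kappa r}} \le \big(|\bar a-a|^{2\nu}\big)^{1/(2\nu)}_{Q_{\kappa r}} \big(|u_{xx}|^{2\mu}\big)^{1/(2\mu)}_{Q_{\kappa r}}.$$
The bound $|\bar a-a|\le 2/\delta$ gives $|\bar a-a|^{2\nu}\le N|\bar a-a|$, whose average over $Q_{\kappa r}$ is controlled by $N a_R^\#$ (by construction of the frozen coefficients) provided $\kappa r\le R$. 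Writing $A=[M(|L_0u|^2)(t_0,x_0)]^{1/2}$, $B=[M(|u_{xx}|^2)(t_0,x_0)]^{1/2}$, $C=[M(|u_{xx}|^{2\mu})(t_0,x_0)]^{1/(2\mu)}$, and $\theta=a_R^\#$, the accumulated bound reads
$$\dashint_Q|u_{xx'}-(u_{xx'})_Q| \le N\kappa^{(d+2)/2}(A+\theta^{1/(2\nu)}C)+N\kappa^{-1/8}B.$$

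Minimizing in $\kappa$: the optimizer $\kappa^{(4d+9)/8}\sim B/(A+\theta^{1/(2\nu)}C)$ balances the two terms and gives a bound of the form $N(A+\theta^{1/(2\nu)}C)^{2\alpha}B^{2\beta}$ with $2\alpha=\tfrac{1/8}{(d+2)/2+1/8}=\tfrac{1}{4d+9}$ and $2\beta=\tfrac{(d+2)/2}{(d+2)/2+1/8}=\tfrac{4d+8}{4d+9}$, matching precisely $\alpha=1/(8d+18)$ and $\beta=(4d+8)/(8d+18)$. Splitting by subadditivity of $t\mapsto t^{2\alpha}$, invoking the Jensen bound $B\le C$ (valid since $2\mu\ge 2$), and using $2\alpha+2\beta=1$ to collapse $C^{2\alpha}B^{2\beta}\le C$, I recover the claim. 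The main obstacle is the bookkeeping in the edge cases: if the optimal $\kappa$ falls below $8/\delta$ I fix $\kappa=8/\delta$, use the trivial bound $\dashint_Q|u_{xx'}-(u_{xx'})_Q|\le 2B$, then combine $B=B^{2\alpha+2\beta}$ with the forced inequality $B\le N(A+\theta^{1/(2\nu)}C)$ to recover the same form; if $\kappa r>R$ the VMO estimate fails, but the support condition $\operatorname{supp}u\subset Q_R$ then forces the averages over $Q_{\kappa r}$ to shrink by the factor $|Q_R|/|Q_{\kappa r}|$, which again fits the right-hand side. Apart from this, the exponents $\alpha$ and $\beta$ are entirely determined by the two scaling exponents $(d+2)/2$ and $1/8$ from Lemma~\ref{lemma03282007_100}.
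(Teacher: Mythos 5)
Your proposal follows the paper's proof almost exactly: freeze the coefficients by averaging ($a^{11}$ over a ball in $x$, the others over a ball in $x'$), invoke Lemma~\ref{lemma03282007_100} with $p=2$, split $\bar L_0u - L_0u$ and apply H\"older with exponents $(\mu,\nu)$, dominate $|\bar a-a|^{2\nu}$ by $N|\bar a-a|$ via boundedness so the oscillation $a_R^\#$ appears, pass to maximal functions, minimize in $\kappa$, and finish with $\cB\le\cC$. The exponent arithmetic ($2\alpha=1/(4d+9)$, $2\beta=(4d+8)/(4d+9)$) is right.

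The one place where your sketch is not quite adequate is the case $\kappa r>R$. With frozen coefficients averaged over $B_{\kappa r}$, the difference $|\bar a^{ij}-a^{ij}|$ involves oscillations of $a^{ij}$ at scale $\kappa r>R$, which $a_R^\#$ does not control; the support condition alone only gives a geometric factor $(R/(\kappa r))^{d+2}$, and this cannot in general be bounded by $a_R^\#$ (which may be arbitrarily small), so the term no longer ``fits the right-hand side.'' The paper resolves this by \emph{redefining} the frozen coefficients when $\kappa r\ge R$: take $\bar a^{11}(t)=\dashint_{B_R}a^{11}(t,y)\,dy$ and $\bar a^{ij}(t,x^1)=\dashint_{B_R'}a^{ij}(t,x^1,y')\,dy'$, so that the error integral (restricted to the support of $u_{xx}$, i.e.\ to $Q_R$) is bounded by $NR^{d+2}a_R^\#\le N(\kappa r)^{d+2}a_R^\#$, which is exactly what the optimization requires. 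With that modification your argument is correct and identical in structure to the paper's.
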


\begin{proof}
Let $\kappa \ge 8/\delta$, $r \in (0, \infty)$, and $(t_0, x_0) = (t_0, x^1_0, x_0') \in \bR^{d+1}$.
We introduce another coefficients $\bar{a}^{ij}$ defined as follows.
$$
\bar{a}^{11}(t) = 
\dashint_{B_{\kappa r}(x_0)} a^{11}(t,y) \, dy
\quad \text{if} \quad \kappa r < R,
$$
$$
\bar{a}^{11}(t) = 
\dashint_{B_{R}} a^{11}(t,y) \, dy
\quad \text{if} \quad \kappa r \ge R.
$$
In case $i \ne 1$ or $j \ne 1$,
$$
\bar{a}^{ij}(t,x^1) = 
\dashint_{B'_{\kappa r}(x_0')} a^{ij}(t,x^1,y') \, dy'
\quad \text{if} \quad \kappa r < R,
$$
$$
\bar{a}^{ij}(t,x^1) = 
\dashint_{B'_{R}} a^{ij}(t,x^1,y') \, dy' 
\quad \text{if} \quad \kappa r \ge R.
$$

Set $\bar{L}_0 u = u_t + \bar{a}^{ij} u_{x^i x^j}$.
Then by Lemma \ref{lemma03282007_100} with an appropriate translation, 
we have
\begin{multline}\label{sharp_u}
\left( |u_{xx'} - (u_{xx'})_{Q_r(t_0, x_0)}|^2 \right)_{Q_r(t_0, x_0)} \\
\le N \kappa^{d+2} \left( |\bar{L}_0 u|^2 \right)_{Q_{\kappa r}(t_0, x_0)}
+ N \kappa^{-1/4} \left( |u_{xx}|^2 \right)_{Q_{\kappa r}(t_0, x_0)}.
\end{multline}
Note that
\begin{equation} \label{esti_chi}
\int_{Q_{\kappa r}(t_0, x_0)}
|\bar{L}_0 u|^2 \, dx \, dt
\le 2 \int_{Q_{\kappa r}(t_0, x_0)}
|L_0 u|^2 \, dx \, dt + N(d) \sum_{i,j=1} \chi_{ij},
\end{equation}
where
$$
\chi_{ij} = \int_{Q_{\kappa r}(t_0,x_0)}
| ( \bar{a}^{ij} - a^{ij} ) u_{x^i x^j} |^2 \, dx \, dt
= \int_{Q_{\kappa r}(t_0,x_0) \cap Q_R} \dots
\le I_{ij}^{1/\nu} J_{ij}^{1/\mu},
$$
$$
I_{ij} = \int_{Q_{\kappa r}(t_0, x_0) \cap Q_R}
| \bar{a}^{ij} - a^{ij} |^{2 \nu} \, dx \, dt,
$$
$$
J_{ij} = \int_{Q_{\kappa r}(t_0, x_0) \cap Q_R} 
|u_{x^i x^j}|^{2 \mu} \, dx \, dt.
$$
Using the definitions of $\bar{a}^{ij}$ and assumptions on $a^{ij}$,
we obtain the following estimates for $I_{ij}$.
If $\kappa r < R$,
$$
I_{11}
\le N \int_{t_0}^{t_0 + (\kappa r)^2}\int_{B_{\kappa r}(x_0)}
| \bar{a}^{11} - a^{11} | \, dx \, dt
\le N (\kappa r)^{d+2} \cO^{\, x}_{\kappa r}(a^{11})
$$
$$
\le N (\kappa r)^{d+2} a^{\#}_{R}.
$$
In case $\kappa r \ge R$,
$$
I_{11}
\le N \int_{0}^{R^2}\int_{B_{R}}
| \bar{a}^{11} - a^{11} | \, dx \, dt
\le N R^{d+2} \cO^{\, x}_{R}(a^{11})
$$
$$
\le N (\kappa r)^{d+2} a^{\#}_{R}.
$$
Now let $j \ne 1$ or $k \ne 1$.
If $\kappa r < R$, 
$$
I_{ij}
\le N
\int_{\Lambda_{\kappa r}(t_0, x_0)}
| \bar{a}^{ij} - a^{ij} | \, dx' \, dx^1 \, dt
\le N (\kappa r)^{d+2} \cO^{\, x'}_{\kappa r}(a^{ij})
$$
$$
\le N (\kappa r)^{d+2} a^{\#}_{R}.
$$
In case $\kappa r \ge R$, 
$$
I_{ij}
\le N
\int_{\Lambda_{R}}
| \bar{a}^{ij} - a^{ij} | \, dx' \, dx^1 \, dt
\le N R^{d+2} \cO^{\, x'}_{R}(a^{ij})
$$
$$
\le N (\kappa r)^{d+2} a^{\#}_{R}.
$$
From the inequality \eqref{esti_chi} and the estimates for $I_{ij}$,
it follows that
$$
\left( |\bar{L}_0 u|^2 \right)_{Q_{\kappa r}(t_0, x_0)}
\le N (a^{\#}_R)^{1/\nu} \left( |u_{xx}|^{2 \mu} \right)^{1/\mu}_{Q_{\kappa r}(t_0, x_0)}
+ N \left( |L_0 u|^2 \right)_{Q_{\kappa r}(t_0, x_0)}.
$$
This, together with \eqref{sharp_u}, gives us
\begin{multline}						\label{2006_09_20_01}
\left(|u_{xx'} - (u_{xx'})_{Q_r(t_0, x_0)}|^2 \right)_{Q_r(t_0, x_0)} 
\le N \kappa^{d+2} (a^{\#}_R)^{1/\nu} \left( |u_{xx}|^{2 \mu} \right)^{1/\mu}_{Q_{\kappa r}(t_0, x_0)} \\
+ N \kappa^{d+2} \left( |L_0 u|^2 \right)_{Q_{\kappa r}(t_0, x_0)}
+ N \kappa^{-1/4} \left( |u_{xx}|^2 \right)_{Q_{\kappa r}(t_0, x_0)}
\end{multline}
for any $r > 0$ and $\kappa \ge 8/\delta$. 
Let 
$$
\cA(t,x) = M(|L_0 u|^2)(t, x), \quad 
\cB(t,x) = M(|u_{xx}|^2)(t, x), 
$$
$$
\cC(t,x) = \left( M(|u_{xx}|^{2 \mu})(t, x) \right)^{1/\mu}.
$$
Then
we observe that $\left( |L_0 u|^2 \right)_{Q_{\kappa r}(t_0, x_0)}
\le \cA(t,x)$ for all $(t,x) \in Q_{r}(t_0,x_0)$.
Similar inequalities are obtained for $\cB$ and $\cC$.
From this and \eqref{2006_09_20_01} 
it follows that, 
for any $(t,x) \in \bR^{d+1}$ and $Q \in \bQ$ such that $(t,x) \in Q$,
$$
\left(|u_{xx'} - (u_{xx'})_{Q}|^2 \right)_{Q}
\le N \kappa^{d+2} (a^{\#}_R)^{1/\nu} \cC(t,x)
$$
$$
+ N \kappa^{d+2} \cA(t,x) 
+ N \kappa^{-1/4} \cB(t,x)
$$
for $\kappa \ge 8/\delta$.
Moreover, the above inequality also holds true for $0 < \kappa < 8/\delta$ because
$$
\dashint_{Q} |u_{xx'} - (u_{xx'})_{Q}|^2 \, dx \, dt 
\le \left(|u_{xx'}|^2 \right)_{Q}
\le (8\delta^{-1})^{1/4} \kappa^{-1/4} \cB(t,x)
$$
for any $(t,x) \in Q \in \bQ$.
Therefore, we finally have
$$
\left(|u_{xx'} - (u_{xx'})_{Q}|^2 \right)_{Q} 
\le N \kappa^{d+2} (a^{\#}_R)^{1/\nu} \cC(t,x)
$$
$$
+ N \kappa^{d+2} \cA(t,x)
+ N \kappa^{-1/4} \cB(t,x)
$$
for all $\kappa > 0$, $(t,x) \in \bR^{d+1}$,
and $Q \in \bQ$ such that $(t,x) \in Q$.
Take the supremum of the left-hand side of the above inequality over
all $Q \in \bQ$ containing $(t,x)$,
and then minimize the right-hand side with respect to $\kappa > 0$.
Also observe that 
$$
{\left(|u_{xx'} - (u_{xx'})_{Q}| \right)_{Q}}^2
\le \left(|u_{xx'} - (u_{xx'})_{Q}|^2 \right)_{Q}.
$$
Then we obtain
$$
\left[u^{\#}_{xx'}(t, x)\right]^2
\le N \left[ (a^{\#}_R)^{1/\nu} \cC(t,x) + \cA(t,x) \right]^{\frac{1}{4d+9}}
\left[\cB(t,x)\right]^{\frac{4d+8}{4d+9}},
$$
where $N = N(d,\delta, \mu)$.
Upon noticing $\cB(t,x) \le \cC(t,x)$, we arrive at the inequality in the theorem.
This finishes the proof.~\end{proof}

\begin{corollary}				\label{corollary_2006_09_13}
For $p>2$, there exist constants $R = R(d,\delta, p,\omega)$
and $N = N(d,\delta,p)$ such that,
for any $u \in C_0^{\infty}(\bR^{d+1})$ vanishing outside $Q_R$, 
we have
$$
\| u_{t} \|_{L_p} + \| u_{xx} \|_{L_p} \le N \|L_0 u\|_{L_p}.
$$
\end{corollary}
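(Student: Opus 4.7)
The strategy is to upgrade the pointwise sharp-function estimate of Theorem~\ref{main_sharp} to an $L_p$ bound via the Fefferman--Stein inequality and the Hardy--Littlewood maximal theorem, then exploit the smallness of $a_R^{\#}$ provided by Assumption~\ref{assum_02} to absorb an $\|u_{xx}\|_{L_p}$ contribution on the left. A short equation-based argument then recovers the one missing second derivative $u_{x^1x^1}$ and the time derivative $u_t$, which the sharp-function estimate does not directly reach.

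Pick $\mu \in (1, p/2)$ (possible since $p>2$) and set $\nu = \mu/(\mu-1)$. The key arithmetic identity is
$$
2\alpha + 2\beta = \frac{2 + (8d+16)}{8d+18} = 1, \qquad 2\beta < 1.
$$
Applying $\|\cdot\|_{L_p}$ to the estimate of Theorem~\ref{main_sharp}, the Hardy--Littlewood inequality $\|[M(|g|^s)]^{1/s}\|_{L_p} \le N\|g\|_{L_p}$ (valid for $s<p$, hence for $s=2\mu$ and $s=2$) bounds the first term by $N(a_R^{\#})^{\alpha/\nu}\|u_{xx}\|_{L_p}$. For the product $[M(|L_0u|^2)]^{\alpha}[M(|u_{xx}|^2)]^{\beta}$, H\"older's inequality with the dual pair $1/(2\alpha)$ and $1/(2\beta)$, followed by Hardy--Littlewood on each maximal function, yields $N\|L_0u\|_{L_p}^{2\alpha}\|u_{xx}\|_{L_p}^{2\beta}$; Young's inequality (using $2\beta<1$) bounds this by $\epsilon\|u_{xx}\|_{L_p} + N(\epsilon)\|L_0u\|_{L_p}$ for every $\epsilon>0$. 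Because $u$ is compactly supported, $u_{xx'}\in L_p$, so Fefferman--Stein gives $\|u_{xx'}\|_{L_p} \le N\|(u_{xx'})^{\#}\|_{L_p}$; invoking Assumption~\ref{assum_02} we arrive at
$$
\|u_{xx'}\|_{L_p} \le \bigl[\epsilon + N\omega(R)^{\alpha/\nu}\bigr]\|u_{xx}\|_{L_p} + N(\epsilon)\|L_0u\|_{L_p}.
$$

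To extend this control from $u_{xx'}$ to the missing derivative $u_{x^1x^1}$, rewrite the equation as $u_t + a^{11}u_{x^1x^1} = F$ with $F := L_0u - \sum_{(i,j)\ne(1,1)} a^{ij}u_{x^ix^j}$, so that $\|F\|_{L_p} \le \|L_0u\|_{L_p} + N\|u_{xx'}\|_{L_p}$. Since $a^{11}$ is measurable in $t$ and VMO in $x\in\bR^d$, the one-dimensional parabolic $L_p$-theory for such a coefficient (applied slicewise in $x'\in\bR^{d-1}$ with a constant uniform in $x'$, then integrated) supplies $\|u_t\|_{L_p} + \|u_{x^1x^1}\|_{L_p} \le N\|F\|_{L_p}$. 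Summing with the previous display gives $\|u_{xx}\|_{L_p} \le N[\epsilon + N\omega(R)^{\alpha/\nu}]\|u_{xx}\|_{L_p} + N(\epsilon)\|L_0u\|_{L_p}$; choosing $\epsilon$ small and then $R$ small enough that $\omega(R)^{\alpha/\nu}$ is small (using $\omega(0)=0$) permits absorption of $\|u_{xx}\|_{L_p}$ on the left, and $\|u_t\|_{L_p}$ is then read off the equation $u_t = L_0u - a^{ij}u_{x^ix^j}$. The main obstacle is precisely this third step: Theorem~\ref{main_sharp} reaches only the mixed derivatives $u_{xx'}$, so $u_{x^1x^1}$ demands a separate mechanism --- either the slicewise one-dimensional theory just outlined (with the nontrivial verification that the slicewise constant is uniform in $x'$), or a parallel sharp-function estimate tailored to $u_{x^1x^1}$ that exploits the VMO-in-$x$ regularity of $a^{11}$.
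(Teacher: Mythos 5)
Your first half — Fefferman--Stein applied to Theorem~\ref{main_sharp}, H\"older for the product of maximal functions, Hardy--Littlewood for $s=2$ and $s=2\mu<p$, and Young's inequality to absorb the $2\beta$-power of $\|u_{xx}\|_{L_p}$ — is exactly the paper's route to the estimate $\|u_{xx'}\|_{L_p}\le N(a_R^{\#})^{\alpha/\nu}\|u_{xx}\|_{L_p}+N\|L_0u\|_{L_p}^{2\alpha}\|u_{xx}\|_{L_p}^{2\beta}$.

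Where you diverge — and where a real gap opens — is the recovery of $u_{x^1x^1}$. You rewrite the equation as $u_t+a^{11}u_{x^1x^1}=F$ and propose to apply a one-dimensional (in $x^1$) parabolic $L_p$-theory slicewise in $x'$. This does not work as stated: $a^{11}$ is VMO jointly in $x=(x^1,x')$, and the restriction of a VMO function to a lower-dimensional slice is not well defined up to null sets, nor does the joint VMO modulus control the one-dimensional modulus of $a^{11}(t,\cdot,x')$ for a.e.\ $x'$. Consequently the slicewise constant $N(x')$ is not uniform in $x'$, and this is not merely a ``nontrivial verification'' but an obstruction. The operator $u_t+a^{11}\partial_{x^1}^2$ is also degenerate as a $(d+1)$-dimensional parabolic operator, so the full-space $L_p$ theory does not apply to it directly. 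The paper avoids all of this by adding $\Delta_{d-1}u$ to both sides: set $g = L_0u + \Delta_{d-1}u - \sum_{(i,j)\ne(1,1)}a^{ij}u_{x^ix^j}$, so that $L_1u:=u_t+a^{11}(t,x)u_{x^1x^1}+\Delta_{d-1}u = g$. Now $L_1$ is a uniformly parabolic operator on all of $\bR^{d+1}$ whose leading coefficients are measurable in $t$ and VMO in $x$, so Corollary~3.7 of \cite{Krylov_2005} applies directly and yields $\|u_{x^1x^1}\|_{L_p}\le N\|g\|_{L_p}\le N(\|L_0u\|_{L_p}+\|u_{xx'}\|_{L_p})$ for $u$ vanishing outside a small $Q_R$. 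This is the missing ingredient; once it is in hand, your absorption argument (choose $R$ small so that $N(a_R^{\#})^{\alpha/\nu}\le 1/2$ via Assumption~\ref{assum_02}, then Young) and the bound $\|u_t\|_{L_p}\le\|L_0u\|_{L_p}+N\|u_{xx}\|_{L_p}$ close the proof exactly as in the paper.
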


\begin{proof}
Let $\mu$ be a real number such that $p > 2 \mu > 2$.
Then by applying the Fefferman-Stein theorem on sharp functions,
H\"{o}lder's inequality, and Hardy-Littlewood maximal function theorem
on the inequality in Theorem \ref{main_sharp},
we obtain
\begin{equation}\label{L_p_esti}
\| u_{xx'} \|_{L_p}
\le N (a_R^{\#})^{\frac{\alpha}{\nu}} \| u_{xx} \|_{L_p}
+ N \|L_0 u\|_{L_p}^{2 \alpha} \|u_{xx}\|_{L_p}^{2 \beta},
\end{equation}
where, as noted in Theorem \ref{main_sharp},
$1/\mu + 1/\nu =1$ and $2 \alpha + 2 \beta = 1$.
On the other hand,
let
$$
g = L_0 u + \Delta_{d-1} u 
- \sum_{i \ne 1, j \ne 1} a^{ij} u_{x^i x^j},
$$
where $\Delta_{d-1} u = u_{x^2 x^2} + \dots + u_{x^d x^d}$.
Then 
$$
u_t + a^{11} u_{x^1 x^1} + \Delta_{d-1} u = g.
$$
Note that the coefficients of the operator 
$$
L_1 u = u_t + a^{11}(t,x) u_{x^1 x^1} + \Delta_{d-1} u
$$
satisfy the assumptions in Corollary 3.7 of \cite{Krylov_2005}. 
Thus there exist $R = R(d, \delta, p, \omega)$ 
and $N = N(d, \delta, p)$ such that
$$
\| u_{x^1x^1} \|_{L_p} \le N \| g \|_{L_p}
$$
if $u$ vanishes outside $Q_R$.
This leads to
$$
\|u_{x^1x^1}\|_{L_p}
\le N \left( \| L_0 u \|_{L_p} + \| u_{xx'} \|_{L_p} \right)
$$
for $u \in C_0^{\infty}(\bR^{d+1})$ vanishing outside $Q_R$.
This and \eqref{L_p_esti} allow us to have
$$
\| u_{xx} \|_{L_p} 
\le N \| L_0 u \|_{L_p}
+ N (a_R^{\#})^{\frac{\alpha}{\nu}} \| u_{xx} \|_{L_p}
+ N \|L_0 u\|_{L_p}^{2 \alpha} \|u_{xx}\|_{L_p}^{2 \beta}. 
$$
Take another sufficiently small $R$ (we call it $R$ again)
which is not greater than the $R$ above,
so that it satisfies 
\begin{equation}				\label{2006_09_13_01}
N (a_R^{\#})^{\frac{\alpha}{\nu}} \le 1/2.
\end{equation}
Then we obtain
$$
\frac{1}{2}\| u_{xx} \|_{L_p} 
\le N \| L_0 u \|_{L_p} + N \|L_0 u\|_{L_p}^{2 \alpha} \|u_{xx}\|_{L_p}^{2 \beta},
$$
which implies that 
$$
\| u_{xx} \|_{L_p} 
\le N \| L_0 u \|_{L_p}.
$$
Finally, observe that 
$$
\|u_t\|_{L_p} = \|L_0 u - a^{ij} u_{x^ix^j}\|_{L_p} 
\le \|L_0 u \|_{L_p} + N \|u_{xx}\|_{L_p}.
$$
This finishes the proof.~\end{proof}

\begin{proof}[\textit{\textbf{Proof of Theorem ~\ref{theorem05192007_01}}}]
We have an $L_p$-estimate for functions with small compact support.
Then the rest of the proof can be done by following the argument in \cite{Krylov_2005}.~\end{proof}

\section{Proof of Theorem \ref{theorem03192007}}						\label{section05202007}

As in section \ref{section04092007},
we set
$$
L_0 u = u_t + a^{ij}(t,x) u_{x^i x^j},
$$
where coefficients $a^{ij}$ satisfy Assumption \ref{assum_01} and \ref{assum_02}.

\begin{lemma}							\label{lemma04022007}
Let $q > p \ge 2$, and $r \in (0,1]$.
Assume that $v \in W_{q,\text{loc}}^{1,2}(\bR^{d+1})$
satisfies $L_0 v = 0$ in $Q_{2r}$.
Then
$$
\left( |v_{xx}|^q \right)_{Q_r}^{1/q}
\le
N \left( |v_{xx}|^2 \right)^{1/2}_{Q_{2r}}
\le
N \left( |v_{xx}|^p \right)_{Q_{2r}}^{1/p},
$$
where $N$ depends only on $d$, $q$, $\delta$, and the function $\omega$.
\end{lemma}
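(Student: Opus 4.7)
The plan is as follows. The second inequality is immediate from Hölder's inequality on $Q_{2r}$: since $p \ge 2$ and the normalized integrals $(\cdot)_{Q_{2r}}$ are probability averages, one gets $(|v_{xx}|^2)_{Q_{2r}}^{1/2} \le (|v_{xx}|^p)_{Q_{2r}}^{1/p}$ with constant $1$. I therefore focus entirely on the first inequality.

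First I would rescale parabolically via $\tilde v(t,x) = v(r^2 t, r x)$ and $\tilde a^{ij}(t,x) = a^{ij}(r^2 t, r x)$. Both sides of the first inequality are invariant, the equation $\tilde L_0 \tilde v = 0$ holds on $Q_2$, and since $r \le 1$ the rescaled coefficients have oscillation modulus bounded by $\omega$ and $\tilde a^{11}$ remains VMO in $x$. This reduces the problem to proving
$$\|v_{xx}\|_{L_q(Q_1)} \le N(d,q,\delta,\omega) \, \|v_{xx}\|_{L_2(Q_2)}$$
whenever $L_0 v = 0$ on $Q_2$.

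Next I would combine the $L_s$-solvability from Theorem \ref{theorem05192007_01} with a bootstrap on the integrability exponent $s$. For $1 \le \rho_1 < \rho_2 \le 2$ and a cutoff $\zeta \in C_0^\infty(Q_{\rho_2})$ equal to $1$ on $Q_{\rho_1}$, with derivatives controlled by powers of $(\rho_2 - \rho_1)^{-1}$, the compactly supported $w = \zeta v$ satisfies, using $L_0 v = 0$ on $Q_2$ and the symmetry $a^{ij} = a^{ji}$,
$$L_0 w - \lambda w = \zeta_t v + 2 a^{ij} \zeta_{x^i} v_{x^j} + a^{ij} \zeta_{x^i x^j} v - \lambda \zeta v.$$
For any $s > 2$, applying Theorem \ref{theorem05192007_01} with $\lambda \ge \lambda_0(d,s,\delta,\omega)$, combined with the standard interpolation $\|v_x\|_{L_s} \le \varepsilon \|v_{xx}\|_{L_s} + N_\varepsilon \|v\|_{L_s}$ and a Giaquinta-type iteration on nested radii to absorb the $v_x$ contribution, yields a Caccioppoli-style bound
$$\|v_{xx}\|_{L_s(Q_{\rho_1})} \le N(d,s,\delta,\omega,\rho_2-\rho_1) \, \|v\|_{L_s(Q_{\rho_2})}.$$
Then I bootstrap: choose $2 = s_0 < s_1 < \cdots < s_N$ with $s_N \ge q$, and a nested sequence of cubes shrinking from $Q_2$ to $Q_1$. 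At each step, the parabolic Sobolev embedding $W^{1,2}_{s_k} \hookrightarrow L_{s_{k+1}}$ (with $s_{k+1} = s_k(d+2)/(d+2-2 s_k)$ while $2 s_k < d+2$, and any finite exponent once $2 s_k > d+2$) promotes an $L_{s_k}$-bound on $v$ to an $L_{s_{k+1}}$-bound on a slightly smaller cube; reapplying the Caccioppoli bound then upgrades this to $L_{s_{k+1}}$-control of $v_{xx}$. Finitely many iterations (their number depending only on $d$ and $q$) reach $s_N \ge q$ on $Q_1$, and a final Hölder step gives the claim.

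The main obstacle will be the bookkeeping of the nested cubes and exponents, particularly handling the Sobolev borderline $2 s_k = d + 2$, which is crossed with a single extra iteration once $s_k > (d+2)/2$. All of the ingredients — the cutoff computation, the $L_s$-solvability, the interpolation, the iteration on radii, and the parabolic Sobolev embedding — are standard, and the constant produced depends only on $d$, $q$, $\delta$, and $\omega$, as required by the statement.
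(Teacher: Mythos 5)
The second inequality is indeed just Jensen/H\"older, and your rescaling plus Caccioppoli-and-absorption strategy is essentially the route of Lemma~6.3 and Corollary~6.4 in \cite{Krylov_2007_mixed_VMO} that the paper invokes. But there is a genuine gap at the end. As you have set it up, the bootstrap produces an estimate of the form $\|v_{xx}\|_{L_q(Q_1)} \le N\|v\|_{L_2(Q_2)}$, with $v$ itself --- not $v_{xx}$ --- on the right-hand side, and your ``final H\"older step'' cannot convert this into the lemma's conclusion. In fact the inequality $\|v\|_{L_2(Q_2)} \le N\|v_{xx}\|_{L_2(Q_2)}$ is simply false: any nonzero constant, and more generally any function affine in $x$, solves $L_0 v = 0$ with $v_{xx}\equiv 0$ while $\|v\|_{L_2(Q_2)} > 0$.

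The missing ingredient is the affine normalization that the referenced argument carries out. Since $L_0$ annihilates $\ell(x)=a+b\cdot x$, one must run the Caccioppoli/Sobolev bootstrap on $w=v-\ell$ rather than on $v$ (note $w_{xx}=v_{xx}$, $L_0 w=0$, and $w_t=-a^{ij}w_{x^ix^j}$), which yields $\|v_{xx}\|_{L_q(Q_1)} \le N\inf_{\ell}\|v-\ell\|_{L_2(Q_2)}$. One then needs the further, nontrivial fact that $\inf_\ell\|v-\ell\|_{L_2(Q_2)} \le N\|v_{xx}\|_{L_2(Q_2)}$. This does not follow from the naive parabolic Poincar\'e for $v_x-b$, which would bring in the uncontrolled third derivative $v_{xt}$; rather one uses a second-order Poincar\'e in $x$ on each time slice and then a one-dimensional Poincar\'e in $t$ on the resulting affine coefficients, together with the equation $v_t=-a^{ij}v_{x^ix^j}$ to bound $\|w_t\|_{L_2}$ by $N\|v_{xx}\|_{L_2}$. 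Without this step the proof does not deliver the stated estimate.
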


\begin{proof}
This lemma is proved in the same way as Corollary 6.4 in \cite{Krylov_2007_mixed_VMO}.
As discussed in the proof of Lemma 4.1 in \cite{Doyoon:parabolic:mixed:2006},
the key step is to have the estimate 
$$
\| u_{xx} \|_{L_p(Q_r)}
\le N
\left( \|L_0 u\|_{L_p(Q_{\kappa r})}
+ r^{-1}\|u_x\|_{L_p(Q_{\kappa r})}
+ r^{-2}\|u\|_{L_p(Q_{\kappa r})} \right)
$$
for $p \in (2, \infty)$ and $u \in W_{p, \text{loc}}^{1,2}(\bR^{d+1})$,
where $r \in (0,1]$, $\kappa \in (1, \infty)$,
and $N$ depends only on $d$, $p$, $\delta$, $\kappa$, and the function $\omega$.
This is obtained using
Theorem \ref{theorem05192007_01} in this paper
and the argument in the proof of Lemma 6.3 of \cite{Krylov_2007_mixed_VMO}.
~\end{proof}

In the following we state without proofs some results which are necessary for the proof of Theorem \ref{theorem03192007}.
They can be proved following the arguments in \cite{Krylov_2007_mixed_VMO}.
Alternatively, one can follow the proofs of the corresponding statements 
(Theorem 6.1, Corollary 6.2, Lemma 6.3, and Corollary 6.4)
in section 6 (also see section 4) of the paper \cite{Doyoon:parabolic:mixed:2006}.
Note that Lemma \ref{lemma04022007} above is needed in the proof of the following theorem.

\begin{theorem}
Let $p \ge 2$.
In case $p = 2$, we assume that the coefficients $a^{ij}(t,x)$ of $L_0$ are independent of $x' \in \bR^{d-1}$.
Then there exists a constant $N$, depending on $d$, $p$, $\delta$, and 
the function $\omega$, such that,
for any $u \in C_0^{\infty}(\bR^{d+1})$, $\kappa \ge 16/\delta$,
and $r \in (0, 1/\kappa]$,
we have
$$
\dashint_{Q_r}
| u_{xx'}(t,x) - \left( u_{xx'} \right)_{Q_r} |^p \, dx \, dt
$$
$$
\le N \kappa^{d+2} \left( |L_0 u|^p \right)_{Q_{\kappa r}}
+ N \left( \kappa^{-\nu p} + \kappa^{d+2} (a_{\kappa r}^{\#})^{1/2} \right) \left( |u_{xx}|^p \right)_{Q_{\kappa r}},
$$
where $\nu = 1/2 - 3/(4p)$
\end{theorem}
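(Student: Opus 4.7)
The plan is to freeze the leading coefficients as in the proof of Theorem \ref{main_sharp}, apply Lemma \ref{lemma03282007_100} at the claimed $L_p$-level, and then turn the resulting frozen-coefficient error $\bar L_0 u-L_0 u=\sum_{ij}(\bar a^{ij}-a^{ij})u_{x^ix^j}$ into a clean $L^p$-term through a local Dirichlet-type decomposition $u=u_1+v$ whose homogeneous piece $v$ enjoys the reverse H\"older improvement of Lemma \ref{lemma04022007}.

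\textbf{Setup and application of Lemma \ref{lemma03282007_100}.} Fix $(t_0,x_0)\in\bR^{d+1}$, $\kappa\ge 16/\delta$, and $r\in(0,1/\kappa]$, so $\kappa r\le 1$. Put
$$
\bar a^{11}(t)=\dashint_{B_{\kappa r}(x_0)} a^{11}(t,y)\,dy,\qquad \bar a^{ij}(t,x^1)=\dashint_{B'_{\kappa r}(x_0')} a^{ij}(t,x^1,y')\,dy'\ \text{ for }(i,j)\ne(1,1),
$$
and $\bar L_0 w=w_t+\bar a^{ij}w_{x^ix^j}$. By construction and Assumption \ref{assum_02},
$$
\dashint_{Q_{\kappa r}(t_0,x_0)}|a^{ij}-\bar a^{ij}|\,dx\,dt\le N\,a_{\kappa r}^{\#}.
$$
The operator $\bar L_0$ falls under the hypotheses of Lemma \ref{lemma03282007_100}, which gives
$$
\dashint_{Q_r(t_0,x_0)} |u_{xx'}-(u_{xx'})_{Q_r}|^p\,dx\,dt\le N\kappa^{d+2}(|\bar L_0 u|^p)_{Q_{\kappa r}}+N\kappa^{-\nu p}(|u_{xx}|^p)_{Q_{\kappa r}}.
$$

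\textbf{Error estimate.} Since $\bar L_0 u=L_0 u+\sum_{ij}(\bar a^{ij}-a^{ij})u_{x^ix^j}$, it suffices to show
\begin{equation}\label{eq:err-target}
(|(\bar a^{ij}-a^{ij})u_{xx}|^p)_{Q_{\kappa r}}\le N(a_{\kappa r}^{\#})^{1/2}(|u_{xx}|^p)_{Q_{C\kappa r}}+N(|L_0 u|^p)_{Q_{C\kappa r}}
\end{equation}
for a dimensional constant $C$; the enlarged cube on the right is absorbed back into $Q_{\kappa r}$ by replacing $\kappa$ with $\kappa/C$ at the cost of doubling the threshold $16/\delta$. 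To prove \eqref{eq:err-target}, split $u=u_1+v$ on $Q_{2\kappa r}(t_0,x_0)$ by a standard Dirichlet decomposition so that $L_0 v=0$ in $Q_{2\kappa r}$ and $\|u_{1,xx}\|_{L_p(Q_{2\kappa r})}\le N\|L_0 u\|_{L_p(Q_{2\kappa r})}$ (whose existence follows from the global $L_p$-theory supplied by Theorem \ref{theorem05192007_01} via a localization). For $u_1$, boundedness of $\bar a^{ij}-a^{ij}$ yields
$$
(|(\bar a^{ij}-a^{ij})u_{1,xx}|^p)_{Q_{\kappa r}}\le N(|u_{1,xx}|^p)_{Q_{\kappa r}}\le N(|L_0 u|^p)_{Q_{2\kappa r}}.
$$
For $v$, H\"older's inequality with conjugate exponents $(2,2)$, boundedness of $\bar a^{ij}-a^{ij}$, and the VMO oscillation estimate give
$$
(|(\bar a^{ij}-a^{ij})v_{xx}|^p)_{Q_{\kappa r}}\le N(a_{\kappa r}^{\#})^{1/2}(|v_{xx}|^{2p})_{Q_{\kappa r}}^{1/2},
$$
and since $L_0 v=0$ in $Q_{2\kappa r}$, Lemma \ref{lemma04022007} with $q=2p$ upgrades this to
$$
(|v_{xx}|^{2p})_{Q_{\kappa r}}^{1/2}\le N(|v_{xx}|^p)_{Q_{2\kappa r}}\le N(|u_{xx}|^p)_{Q_{2\kappa r}}+N(|L_0 u|^p)_{Q_{2\kappa r}}.
$$
These together yield \eqref{eq:err-target} with $C=2$, and substituting back into the output of Lemma \ref{lemma03282007_100} produces the desired inequality.

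\textbf{Main obstacle.} The central difficulty is the exponent mismatch introduced by H\"older's inequality: the natural $(2,2)$-split yields a factor $(|u_{xx}|^{2p})^{1/2}$ which has no comparison with $(|u_{xx}|^p)$ for a generic function; the Dirichlet decomposition into $u_1+v$ is engineered precisely so that the reverse H\"older inequality of Lemma \ref{lemma04022007} can convert the $2p$-th power into the $p$-th power on the homogeneous piece $v$, while the inhomogeneous piece $u_1$ is handled by a crude $L^p$-bound that only costs an $(|L_0 u|^p)$-term. A secondary subtlety is that the whole-space $L_p$-solvability used in the decomposition is established above only for $p>2$ (Theorem \ref{theorem05192007_01}); in the remaining edge case $p=2$ one uses the hypothesis that the coefficients are independent of $x'$ to appeal instead to the $L_2$-theory of Theorem 2.2 of \cite{Doyoon&Krylov:parabolic:2006}.
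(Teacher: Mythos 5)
Your strategy --- freeze the coefficients over $Q_{\kappa r}$, apply Lemma \ref{lemma03282007_100} at the $L_p$-level, and convert the frozen-coefficient error $\sum(\bar a^{ij}-a^{ij})u_{x^ix^j}$ into a genuine $L^p$-term via a Dirichlet decomposition $u=u_1+v$ together with the reverse H\"older inequality of Lemma \ref{lemma04022007} on the homogeneous part $v$ --- is exactly the route the paper intends: it defers the proof to the corresponding arguments in \cite{Krylov_2007_mixed_VMO} and \cite{Doyoon:parabolic:mixed:2006}, both of which carry out the same decomposition-plus-reverse-H\"older scheme. Your bookkeeping of the cube dilations (absorbing $Q_{2\kappa r}$ back into $Q_{\kappa r}$ by replacing $\kappa\mapsto\kappa/2$, hence the $16/\delta$ threshold from the lemma's $8/\delta$) and the absorption of the $(a^{\#})^{1/2}(|L_0u|^p)$ cross term are also correct.

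One detail deserves to be made explicit rather than left implicit. Lemma \ref{lemma04022007} with $q=2p$ requires $v\in W^{1,2}_{2p,\mathrm{loc}}$, while a decomposition at the level of $L_p$-theory only obviously gives $v\in W^{1,2}_{p,\mathrm{loc}}$. This is easily repaired: since $u\in C_0^\infty(\bR^{d+1})$ and the coefficients are bounded, the localized right-hand side $\zeta L_0u$ is bounded with compact support, hence lies in $L_q$ for every $q$; applying Theorem \ref{theorem05192007_01} at exponent $2p>2$ produces $u_1\in W^{1,2}_{2p}$, and then $v=u-u_1\in W^{1,2}_{2p,\mathrm{loc}}$ as required (while the needed $L_p$-estimate $\|u_{1,xx}\|_{L_p}\le N\|L_0u\|_{L_p}$ comes from the same theorem at exponent $p$, or from the $L_2$-theory of \cite{Doyoon&Krylov:parabolic:2006} in the edge case $p=2$, with the two constructions giving the same $u_1$ by uniqueness). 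With this remark inserted, the argument is complete.
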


As in \cite{Doyoon:parabolic:mixed:2006},
we use the following notations, which are $1$-dimensional versions of the notations introduced in section \ref{section04092007}.
If $g$ is a function defined on $\bR$,
by $(g)_{(\sfa,\sfb)}$ we mean
$$
(g)_{(\sfa, \sfb)} = \dashint_{(\sfa, \sfb)} g(s) \, ds
= (\sfb - \sfa)^{-1} \int_{\sfa}^{\sfb} g(s) \, ds.
$$
The maximal and sharp function of $g$ are defined by
$$
M g (t) = \sup_{t \in (\sfa, \sfb)} \dashint_{(\sfa, \sfb)} |g(s)| \, ds,
$$
$$
g^{\#} (t) = \sup_{t \in (\sfa, \sfb)} \dashint_{(\sfa, \sfb)} | g(s) - (g)_{(\sfa, \sfb)} | \, ds,
$$
where the supremums are taken over all intervals $(\sfa, \sfb)$
containing $t$.

\begin{corollary}							\label{corollary04102007_01}
Let $p \ge 2$.
In case $p = 2$, we assume that the coefficients $a^{ij}(t,x)$ of $L_0$ are independent of $x' \in \bR^{d-1}$.
Then there exists a constant $N$, depending on $d$, $p$, $\delta$, and 
the function $\omega$, such that,
for any $u \in C_0^{\infty}(\bR^{d+1})$, $\kappa \ge 16/\delta$,
and $r \in (0, 1/\kappa]$,
we have
$$
\dashint_{(0,r^2)} \left| \varphi(t) - (\varphi)_{(0,r^2)} \right|^p \, dt
$$
$$
\le N \kappa^{d+2} (\psi^p)_{(0,(\kappa r)^2)}
+ N \left( \kappa^{-\nu p} + \kappa^{d+2} (a_{\kappa r}^{\#})^{1/2} \right) (\zeta^p)_{(0,(\kappa r)^2)},
$$
where $\nu = 1/2 - 3/(4p)$,
$$
\varphi(t) = \| u_{xx'}(t, \cdot) \|_{L_p(\bR^d)},
$$
$$
\zeta(t) = \| u_{xx}(t, \cdot) \|_{L_p(\bR^d)},
\quad
\psi(t) = \| L_{0} u(t, \cdot) \|_{L_p(\bR^d)}.
$$
\end{corollary}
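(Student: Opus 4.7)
The approach is to parallel the proof of the preceding theorem, but at the level of scalar time-functions obtained by taking spatial $L_p(\bR^d)$ norms of $u_{xx'}$, $u_{xx}$, and $L_0u$ at each time $t$. The LHS of the corollary is a pure time-oscillation of $\varphi$, and the RHS is built from time averages of $\psi^p$ and $\zeta^p$, so every spatial ball average in the preceding theorem gets replaced by an $L_p(\bR^d)$-norm in $x$.

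First, I would prove a vector-valued version of Lemma~\ref{lemma03282007_100}: for $\bar L_0$ with $\bar a^{11}(t)$ independent of $x$ and $\bar a^{ij}(t,x^1)$ independent of $x'$ when $i \ne 1$ or $j \ne 1$, one should have, for some constant $c$ depending on $u$ and $r$,
\begin{equation*}
\dashint_{(0,r^2)} |\varphi(t) - c|^p \, dt \le N\kappa^{d+2} \bigl(\|\bar L_0 u(t,\cdot)\|_{L_p(\bR^d)}^p\bigr)_{(0,(\kappa r)^2)} + N \kappa^{-\nu p} (\zeta^p)_{(0, (\kappa r)^2)}.
\end{equation*}
Because $\bar L_0$ commutes with translations in $x'$, one applies scalar Lemma~\ref{lemma03282007_100} to $u(\cdot,\cdot,\cdot+y')$ and integrates the $p$-th power over $y' \in \bR^{d-1}$; Fubini converts the $x'$-ball averages into $L_p(\bR^{d-1})$-norms in $x'$. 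For the $x^1$-direction one uses the time-change built into the proof of Lemma~\ref{lemma03282007_100}, which reduces matters to the case $\hat a^{11} \equiv 1$; after that reduction, a Banach-valued analog of Theorem~5.9 of \cite{Doyoon:parabolic:mixed:2006} produces the full $L_p(\bR^d)$-norms.

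Next, I would reduce from $L_0$ to $\bar L_0$ by freezing coefficients exactly as in the proof of the preceding theorem. The error $L_0 u - \bar L_0 u = \sum_{i,j}(a^{ij} - \bar a^{ij}) u_{x^i x^j}$ is estimated in spatial $L_p(\bR^d)$ pointwise in $t$, and then in time $L_p((0,(\kappa r)^2))$ via H\"older's inequality with a conjugate pair $(\mu,\nu')$. The time-averaged coefficient oscillation is bounded by $\cO^{\,x}_{\kappa r}(a^{11}) + \sum_{i \ne 1\,\text{or}\,j \ne 1}\cO^{\,x'}_{\kappa r}(a^{ij}) \le a^{\#}_{\kappa r}$, and the same optimization over $\mu$ as in the preceding theorem produces the extra factor $\kappa^{d+2}(a^{\#}_{\kappa r})^{1/2}$ in the coefficient of $(\zeta^p)_{(0,(\kappa r)^2)}$. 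Replacing the constant $c$ in the vector-valued Lemma by $(\varphi)_{(0,r^2)}$ costs only a factor $N(p)$, giving the estimate in the corollary.

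The principal technical obstacle is the $x^1$-direction of the vector-valued Lemma: the frozen operator $\bar L_0$ is not $x^1$-translation invariant, because $\bar a^{ij}$ depends on $x^1$ for $i \ne 1$ or $j \ne 1$, so direct $x^1$-shifting is unavailable. The way out, as in the papers cited in the paragraph preceding this corollary, is the time-change from the proof of Lemma~\ref{lemma03282007_100}, after which $\hat a^{11} = 1$ and the $x^1$-variable is handled by a Banach-valued Sobolev-type estimate. Once that ingredient is in place, the remaining steps are direct transcriptions of the corresponding scalar arguments.
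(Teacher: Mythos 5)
Your strategy departs from the intended derivation and, more importantly, it has two genuine gaps.

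The paper's Corollary~\ref{corollary04102007_01} is meant to follow from the unnumbered theorem immediately preceding it by a short translation--Fubini argument, not by re-proving a Banach-space-valued analogue of Lemma~\ref{lemma03282007_100}. Fix $z\in\bR^d$ and apply the preceding theorem to the cylinder $Q_r(0,z)$ (this is legitimate: the hypotheses and the constant $N$ are translation-invariant in $(t,x)$, and the quantity $a^{\#}_{\kappa r}$ is already a supremum over all centers). The passage to the one-dimensional statement then uses only the reverse triangle inequality $|\varphi(t)-\varphi(s)|\le\|u_{xx'}(t,\cdot)-u_{xx'}(s,\cdot)\|_{L_p(\bR^d)}$, Jensen's inequality $\dashint|\varphi-(\varphi)|^p\le\dashint\dashint|\varphi(t)-\varphi(s)|^p\,dt\,ds$, and the elementary identity $\int_{\bR^d}|g|^p\,dx=\int_{\bR^d}\dashint_{B_r(z)}|g|^p\,dx\,dz$. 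Bounding $|u_{xx'}(t,x)-u_{xx'}(s,x)|^p$ by $2^{p-1}$ times the sum of deviations from $(u_{xx'})_{Q_r(0,z)}$, applying the preceding theorem at each $z$, and integrating over $z\in\bR^d$ (Fubini converts each $(|g|^p)_{Q_{\kappa r}(0,z)}$ into $(\|g(t,\cdot)\|^p_{L_p(\bR^d)})_{(0,(\kappa r)^2)}$) gives the corollary outright, with no new analytical input.

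Your proposal instead tries to rebuild the chain from scratch at the level of $L_p(\bR^d)$-valued functions, and both legs of that chain fail. First, the freezing step does not survive the passage to full spatial $L_p(\bR^d)$ norms: in the proof of Theorem~\ref{main_sharp} the H\"older estimate of $\int|(a^{ij}-\bar a^{ij})u_{x^ix^j}|^2$ relies on $\bar a^{ij}$ being an average of $a^{ij}$ over the \emph{local} ball $B_{\kappa r}(x_0)$ or $B'_{\kappa r}(x_0')$, so the factor $\int_{Q_{\kappa r}}|a^{ij}-\bar a^{ij}|^{2\nu}$ is controlled by $a^{\#}_{\kappa r}$. Replacing that local average by $\int_{\bR^d}|a^{ij}(t,x)-\bar a^{ij}(t,x^1)|^{2\nu}\,dx$ gives no smallness at all: VMO oscillation of $a^{ij}$ in $x'$ over small balls says nothing about the size of $a^{ij}-\bar a^{ij}$ over all of $\bR^{d-1}$, and in general that integral is infinite. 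Second, even in the frozen picture, the ``Banach-valued analog of Theorem~5.9 of \cite{Doyoon:parabolic:mixed:2006}'' that you invoke for the $x^1$-direction is not something you can wave at: it is a nontrivial statement (the whole difficulty in \cite{Doyoon:parabolic:mixed:2006} and \cite{Krylov_2007_mixed_VMO} is precisely handling the direction in which the coefficients remain merely measurable), and your translation-in-$x'$ plus time-change observations do not reduce it to anything already proved. In short, the proposal identifies the right target but replaces a two-line consequence of the preceding theorem with an unproved vector-valued theory and a coefficient-freezing step that is false in the global norm.
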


\begin{lemma}							\label{lemma04102007_01}
Let $p \ge 2$.
In case $p = 2$, we assume that the coefficients $a^{ij}(t,x)$ of $L_0$ are independent of $x' \in \bR^{d-1}$.
Let $R \in (0,1]$ and $u$ be a function in $C_0^{\infty}(\bR^{d+1})$
such that $u(t,x) = 0$ for $t \notin (0,R^4)$.
Then
$$
\varphi^{\#}(t_0)
\le N \kappa^{(d+2)/p} \left(M \psi^p (t_0)\right)^{1/p}
$$
$$
+ N \left( (\kappa R)^{2-2/p} + \kappa^{-\nu} + \kappa^{(d+2)/p} \left(\omega(R)\right)^{1/2p}   \right) \left(M \zeta^p (t_0)\right)^{1/p}
$$
for all $\kappa \ge 16/\delta$ and $t_0 \in \bR$,
where $\nu = 1/2 - 3/(4p)$, $N = N(d, p, \delta,\omega)$,
and the functions $\varphi$, $\zeta$, $\psi$
are defined as in Corollary \ref{corollary04102007_01}.
\end{lemma}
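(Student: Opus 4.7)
Fix $t_0 \in \bR$ and consider an arbitrary interval $I = (\sfa, \sfb) \ni t_0$; set $r := \sqrt{\sfb - \sfa}$. I will bound $\dashint_I |\varphi - (\varphi)_I|\,dt$ by the target right-hand side with $N$ independent of $I$, and take the supremum over $I \ni t_0$ to conclude. Without loss of generality I may assume $\omega$ is non-decreasing (else replace $\omega(s)$ by $\sup_{\tau \le s}\omega(\tau)$). The argument splits into two regimes of $r$.

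\textbf{Small intervals, $r \le R/\kappa$.} A time translation by $\sfa$ preserves Assumptions \ref{assum_01}--\ref{assum_02} with the same $\omega$, reducing to $\sfa = 0$. Since $r \le 1/\kappa$, Corollary \ref{corollary04102007_01} applies; the condition $\kappa r \le R$ and monotonicity of $\omega$ give $a_{\kappa r}^{\#} \le \omega(R)$. The interval $(\sfa, \sfa + (\kappa r)^2)$ on the right-hand side contains $I$ and hence $t_0$, so the averages of $\psi^p$ and $\zeta^p$ over it are dominated by $M\psi^p(t_0)$ and $M\zeta^p(t_0)$. Combining Jensen's inequality $\dashint |\varphi - (\varphi)_I|\,dt \le (\dashint|\varphi - (\varphi)_I|^p\,dt)^{1/p}$ with the subadditivity $(a+b+c)^{1/p} \le a^{1/p} + b^{1/p} + c^{1/p}$ yields the three contributions $\kappa^{(d+2)/p}(M\psi^p)^{1/p}$, $\kappa^{-\nu}(M\zeta^p)^{1/p}$, and $\kappa^{(d+2)/p}\omega(R)^{1/(2p)}(M\zeta^p)^{1/p}$.

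\textbf{Large intervals, $r > R/\kappa$.} I do not invoke Corollary \ref{corollary04102007_01} but instead use that $\varphi$ vanishes outside $(0, R^4)$. The triangle inequality and the support property give
\[
\dashint_I |\varphi - (\varphi)_I|\,dt \le 2\dashint_I |\varphi|\,dt = \frac{2}{|I|}\int_I |\varphi|\,\chi_{(0, R^4)}\,dt.
\]
H\"older's inequality on the factorization $|\varphi| \cdot \chi_{(0, R^4)}$ with exponents $p$ and $p/(p-1)$, combined with $\varphi \le \zeta$, the bound $|I \cap (0, R^4)| \le R^4$, and $\int_I \zeta^p\,dt \le |I| M\zeta^p(t_0)$ (which holds because $t_0 \in I$), yields
\[
\dashint_I |\varphi|\,dt \le \Bigl(\frac{R^4}{r^2}\Bigr)^{(p-1)/p}(M\zeta^p(t_0))^{1/p}.
\]
The case hypothesis $r > R/\kappa$ forces $R^4/r^2 < (\kappa R)^2$, so the right-hand side is at most $N(\kappa R)^{2-2/p}(M\zeta^p(t_0))^{1/p}$---precisely the remaining target term.

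\textbf{Main obstacle.} The delicate step is the large-interval case: one must apply H\"older directly at the $L^1$-level to $|\varphi|\chi_{(0,R^4)}$. If one instead passes to an $L^p$-sharp estimate first (raising $|\varphi - (\varphi)_I|$ to the $p$-th power and only then invoking the support property), the support-based bound produces $(\kappa R)^{2/p}$ rather than the sharper $(\kappa R)^{2-2/p}$ required by the lemma. Choosing the H\"older split so that the factor $(R^4/r^2)^{(p-1)/p}$ appears and combines cleanly with the case condition $r > R/\kappa$ is the essential trick.
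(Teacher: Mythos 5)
Your argument is correct and follows the standard two-regime decomposition used in the references the paper cites (the paper itself states this lemma without proof, deferring to \cite{Krylov_2007_mixed_VMO} and the corresponding statements in \cite{Doyoon:parabolic:mixed:2006}). Fixing $I=(\sfa,\sfb)\ni t_0$ and $r=\sqrt{\sfb-\sfa}$, you correctly (a) handle $r\le R/\kappa$ by translating Corollary~\ref{corollary04102007_01}, bounding the averages over $(\sfa,\sfa+(\kappa r)^2)\supset I\ni t_0$ by maximal functions at $t_0$, and applying Jensen and $p$-th-root subadditivity; and (b) handle $r>R/\kappa$ by the support of $\varphi$ and H\"older at the $L^1$ level, producing $(R^4/r^2)^{(p-1)/p}<(\kappa R)^{2-2/p}$. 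Taking the supremum over $I\ni t_0$ gives the stated bound.

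One small flaw in the write-up: the ``without loss of generality $\omega$ is non-decreasing'' step is not quite a free reduction, because replacing $\omega$ by $\bar\omega(s)=\sup_{\tau\le s}\omega(\tau)\ge\omega(s)$ \emph{weakens} the target inequality (the conclusion contains $\omega(R)^{1/(2p)}$, not $\bar\omega(R)^{1/(2p)}$). The fix is easier than the wlog: by definition $a_\rho^{\#}$ is itself non-decreasing in $\rho$ (it is a $\sup$ over $r\le\rho$), so $\kappa r\le R$ gives $a_{\kappa r}^{\#}\le a_R^{\#}\le\omega(R)$ directly, with no assumption on $\omega$. With that substitution the proof is complete. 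Your closing remark about what goes wrong if one raises to the $p$-th power before using the support is a fair heuristic, though not precisely argued; it is side commentary and does not affect the validity of the proof.
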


The following corollary 
is proved by repeating word for word 
the proof of Corollary 6.4 in \cite{Doyoon:parabolic:mixed:2006},
but we have to use, instead of Corollary 4.5 in \cite{Doyoon:parabolic:mixed:2006}, the corresponding result
in \cite{Krylov_2007_mixed_VMO} (see Lemma 3.4 and its proof there)
since $a^{11}$ is assumed to be measurable in $t \in \bR$ and VMO in $x \in \bR^d$.

\begin{corollary}
Let $q > p \ge 2$.
Assume that, in case $p = 2$,
the coefficients $a^{ij}$ of $L_0$
are independent of $x' \in \bR^{d-1}$.
Then there exists $R = R(d, p, q, \delta, \omega)$
such that,
for any $u \in C_0^{\infty}(\bR^{d+1})$
satisfying $u(t,x) = 0$ for $t \notin (0, R^4)$,
$$
\|u_t\|_{L_{q,p}} + \|u_{xx}\|_{L_{q,p}}
\le N \| L_0 u \|_{L_{q,p}},
$$
where $N = N(d, p, q, \delta, \omega)$.
\end{corollary}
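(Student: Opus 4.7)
The plan is to lift the $L_p$-argument in Corollary \ref{corollary_2006_09_13} to the mixed-norm setting by applying Lemma \ref{lemma04102007_01} together with one-dimensional (in time) versions of the Fefferman--Stein sharp function theorem and the Hardy--Littlewood maximal function theorem. Let $\varphi$, $\zeta$, $\psi$ be the $L_p(\bR^d)$-norms of $u_{xx'}$, $u_{xx}$, $L_0 u$ introduced in Corollary \ref{corollary04102007_01}. Since $u$ is supported in $(0, R^4)$ in time, so is $\varphi$, and $\|u_{xx'}\|_{L_{q,p}} = \|\varphi\|_{L_q(\bR)}$ with similar identities for $\zeta$ and $\psi$.

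First I would raise the pointwise bound in Lemma \ref{lemma04102007_01} to the $q$-th power and integrate in $t_0 \in \bR$. Since $q/p > 1$, the $L_{q/p}$-boundedness of the (one-dimensional) maximal operator $M$ gives $\|(M\psi^p)^{1/p}\|_{L_q} \le N\|\psi\|_{L_q}$ and similarly for $\zeta$, while the Fefferman--Stein inequality on the real line yields $\|\varphi\|_{L_q} \le N \|\varphi^{\#}\|_{L_q}$. Combining these produces
$$
\|u_{xx'}\|_{L_{q,p}} \le N \kappa^{(d+2)/p} \|L_0 u\|_{L_{q,p}} + N \Theta(\kappa,R) \|u_{xx}\|_{L_{q,p}},
$$
where $\Theta(\kappa,R) = (\kappa R)^{2-2/p} + \kappa^{-\nu} + \kappa^{(d+2)/p}\omega(R)^{1/(2p)}$. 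Next, to control $u_{x^1 x^1}$, I would set $g = L_0 u + \Delta_{d-1} u - \sum_{i \ne 1 \text{ or } j \ne 1} a^{ij} u_{x^i x^j}$ so that $u_t + a^{11} u_{x^1 x^1} + \Delta_{d-1} u = g$. Since $a^{11}$ is measurable in $t$ and VMO in $x \in \bR^d$, Lemma 3.4 of \cite{Krylov_2007_mixed_VMO} (used in place of Corollary 3.7 of \cite{Krylov_2005}) applies in the $L_{q,p}$-setting: there is $R_0 = R_0(d,p,q,\delta,\omega)$ such that if $u$ vanishes in time outside $(0,R_0^4)$, then $\|u_{x^1 x^1}\|_{L_{q,p}} \le N \|g\|_{L_{q,p}} \le N\bigl(\|L_0 u\|_{L_{q,p}} + \|u_{xx'}\|_{L_{q,p}}\bigr)$.

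Finally I would close the estimate by the usual two-step parameter choice: first fix $\kappa = \kappa(d,p,q,\delta,\omega)$ so large that $N\kappa^{-\nu} \le 1/4$, then take $R \le R_0$ so small that $N(\kappa R)^{2-2/p} + N\kappa^{(d+2)/p}\omega(R)^{1/(2p)} \le 1/4$. Adding the two displayed bounds and absorbing $\tfrac{1}{2}\|u_{xx}\|_{L_{q,p}}$ into the left-hand side yields $\|u_{xx}\|_{L_{q,p}} \le N\|L_0 u\|_{L_{q,p}}$, after which $\|u_t\|_{L_{q,p}} \le \|L_0 u\|_{L_{q,p}} + N\|u_{xx}\|_{L_{q,p}}$ gives the full estimate. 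The main technical obstacle will be the order-of-quantifier bookkeeping for $\kappa$ and $R$, since $\Theta(\kappa,R)$ mixes a term that is small for large $\kappa$ with terms that grow in $\kappa$ but are killed by small $R$ — one must first fix $\kappa$ depending only on $(d,p,q,\delta,\omega)$ and only then shrink $R$. Verifying that the case $p = 2$ is covered also requires checking that the hypothesis of $x'$-independence of the $a^{ij}$ is preserved at every step of the argument, in particular in the application of the $L_{q,p}$-estimate for the auxiliary operator $L_1$.
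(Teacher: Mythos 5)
Your proposal matches the paper's intended argument: the paper defers to the proof of Corollary~6.4 in \cite{Doyoon:parabolic:mixed:2006} with Corollary~4.5 there replaced by Lemma~3.4 of \cite{Krylov_2007_mixed_VMO}, and you have reconstructed exactly that route --- apply Lemma~\ref{lemma04102007_01} to $\varphi(t)=\|u_{xx'}(t,\cdot)\|_{L_p(\bR^d)}$, pass to $L_q(\bR)$ via the one-dimensional Fefferman--Stein and Hardy--Littlewood theorems, recover $u_{x^1x^1}$ through the auxiliary operator $u_t+a^{11}u_{x^1x^1}+\Delta_{d-1}u$ via Lemma~3.4 of \cite{Krylov_2007_mixed_VMO}, and close by choosing $\kappa$ large first and then $R$ small. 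The quantifier bookkeeping you flag ($\kappa$ before $R$) is indeed the only delicate point, and you handle it correctly.
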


\begin{proof}[\textit{\textbf{Proof of Theorem ~\ref{theorem03192007}}}]
If $p = q \ge 2$, the theorem follows from
Theorem 2.2 in \cite{Doyoon&Krylov:parabolic:2006}
as well as Theorem \ref{theorem05192007_01} in this paper.
To deal with the case with $q > p \ge 2$,
we use the $L_{q,p}$-estimate proved above for
functions with compact support with respect to $t \in \bR$
and follow the proofs in section 3 of the paper \cite{Krylov_2007_mixed_VMO}.
Theorem \ref{theorem03192007} is now proved.
\end{proof}

\bibliographystyle{plain}

\def\cprime{$'$}\def\cprime{$'$} \def\cprime{$'$} \def\cprime{$'$}
  \def\cprime{$'$} \def\cprime{$'$}

\end{document}